\definecolor{dark-red}{rgb}{0.5,0.15,0.15}
\definecolor{dark-blue}{rgb}{0.15,0.15,0.6}
\definecolor{dark-green}{rgb}{0.15,0.6,0.15}
\renewcommand*{\backref}[1]{}
\renewcommand*{\backrefalt}[4]{%
 \ifcase #1 %
No citations.% use \relax if you do not want the "No citations" message
  \or
(cit. on p. #2).%
  \else
(cit on pp. #2).%
  \fi%
}
\numberwithin{equation}{section}
\newcommand{\Z}{\mathbb{Z}}
\newcommand{\Q}{\mathbb{Q}}
\newcommand{\G}{\mathbb{G}}
\newcommand{\R}{\mathbb{R}}
\newcommand{\W}{\mathbb{W}}
\newcommand{\F}{\mathbb{F}}
\newcommand{\xr}{\xrightarrow}
\renewcommand{\S}{\mathbb{S}}
\newcommand{\Hom}{\operatorname{Hom}}
\newcommand{\Res}{\operatorname{Res}}
\newcommand{\Coind}{\operatorname{Coind}}
\newcommand{\End}{\operatorname{End}}
\DeclareMathOperator{\Aut}{Aut}
\DeclareMathOperator{\Gal}{Gal}
\newtheorem{theorem}{Theorem}[section]
\newtheorem{conjecture}[theorem]{Conjecture}
\newtheorem{lemma}[theorem]{Lemma} 
\newtheorem{cor}[theorem]{Corollary}
\newtheorem{example}[theorem]{Example}
\newtheorem{prop}[theorem]{Proposition} \theoremstyle{definition}
\newtheorem{rem}[theorem]{Remark} 
\newtheorem{definition}[theorem]{Definition}
\Crefname{cor}{Corollary}{Corollaries}
\Crefname{conjecture}{Conjecture}{Conjectures}
\Crefname{rem}{Remark}{Remarks}
\Crefname{question}{Question}{Questions}
\Crefname{prop}{Proposition}{Propositions}
   \def\MR#1{}
\begin{document}
\title{The Tate spectrum of the higher real $K$-theories at height $n=p-1$}
\author{Drew Heard}
\address{Max Planck Institute for Mathematics, Bonn, Germany}
\email{drew.heard@mpim-bonn.mpg.de}
\date{\today}
\begin{abstract}
Let $E_n$ be Morava $E$-theory and let $G \subset \G_n$ be a finite subgroup of $\G_n$, the extended Morava stabilizer group. Let $E_{n}^{tG}$ be the Tate spectrum, defined as the cofiber of the norm map $N:(E_n)_{hG} \to E_n^{hG}$. We use the Tate spectral sequence to calculate $\pi_*E_{p-1}^{tG}$ for $G$ a maximal finite $p$-subgroup, and $p$ an odd prime. We show that $E_{p-1}^{tG} \simeq \ast$, so that the norm map gives equivalence between homotopy fixed point and homotopy orbit spectra. Our methods also give a calculation of $\pi_*E_{p-1}^{hG}$, which is a folklore calculation of Hopkins and Miller.
\end{abstract}
\maketitle

\section{Introduction}
Let $E_n$ be the $n$-th Morava $E$-theory (always considered at a prime $p$, which is suppressed from the notation) with coefficient ring
\[
\pi_*(E_n) = \W(\F_{p^n})[u_1,\ldots,u_{n-1}][\![u^{\pm 1}]\!]
\]
Here $|u_i| =0,|u| = -2$ and   $\W(\F_{p^n})$ is the ring of Witt vectors over the finite field $\F_{p^n}$. By work of Goerss--Hopkins--Miller~\cite{rezkhm,gh04}, the spectrum $E_n$ has an $E_\infty$-action by the Morava stabilizer group, denoted $\G_n$. These theories are complex oriented, and support formal group laws of height $n$. 

 Work of Devinatz and Hopkins, Davis and others~\cite{devhop04,davis06} shows that, for a closed subgroups $K \subset \G_n$, it is possible to define a homotopy fixed point spectrum $E_n^{hK}$ (or at least a spectrum that behaves like a homotopy fixed point spectrum), although we will only ever consider finite subgroups, for which the usual homotopy fixed point construction works (and agrees with the more complicated constructions of Devinatz--Hopkins and Davis). 

 We will work at height $n=p-1$ as here $\G_n$ has infinite cohomological dimension and has interesting maximal finite $p$-subgroups.  Indeed, the maximal finite subgroups (of the closely related group $\S_n$, as well as $\G_n$) have been classified completely by Hewett and Bujard \cite{hewett,bujard2012finite}. At height $n=p-1$, up to conjugation, there is a unique maximal finite subgroup $G$ whose order is divisible by $p$. We note that if $n$ is is not divisible by $(p-1)$, then all maximal finite subgroups have order coprime to $p$ (see~\cite[Theorem 1.3]{hewett} or~\cite[Proposition 1.7]{bujard2012finite}). 

  Recall that for finite subgroups we can form the homotopy orbit spectrum $(E_n)_{hK}$ as $EK_+ \wedge_K E_n$, where $EK$ is, as usual a free, contractible $K$-CW complex, and there is a norm map $N:(E_n)_{hK} \to (E_n)^{hK}$ relating them (see~\Cref{sec:tate}). Define $E_n^{tK}$, the Tate spectrum, to be the cofiber of the norm map. 
Our main result concerns the Tate spectrum associated to the maximal finite $p$-subgroup $E_{p-1}^{tG}$, for $p$ an odd prime. 
\begin{theorem}\label{thm:1}
	Let $p>2$ be an odd prime. The Tate spectrum $E_{p-1}^{tG}$ is contractible and so the norm map $N:(E_{p-1})_{hG} \to E_{p-1}^{hG}$ is an equivalence.
\end{theorem}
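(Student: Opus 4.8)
The plan is to compute $\pi_* E_{p-1}^{tG}$ directly from the Tate spectral sequence
\[
\hat{H}^{s}(G; \pi_t E_{p-1}) \Longrightarrow \pi_{t-s} E_{p-1}^{tG}, \qquad s \in \Z,
\]
whose $E_2$-page is the Tate cohomology of $G$ with coefficients in $\pi_* E_{p-1}$, and to show that this spectral sequence has vanishing $E_\infty$-page. The first move is a reduction to the Sylow $p$-subgroup. Since $E_{p-1}$ is $K(p-1)$-local and hence $p$-local, and since the index $[G:P]$ of a $p$-Sylow subgroup $P \subset G$ is prime to $p$, the composite of restriction and transfer $E_{p-1}^{tG} \xrightarrow{\mathrm{res}} E_{p-1}^{tP} \xrightarrow{\mathrm{tr}} E_{p-1}^{tG}$ is multiplication by the unit $[G:P]$, so $E_{p-1}^{tG}$ is a retract of $E_{p-1}^{tP}$. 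It therefore suffices to prove $E_{p-1}^{tP} \simeq \ast$. At height $n = p-1$ the relevant $p$-rank is $1$, so $P$ is cyclic of order $p$, and I am reduced to the group $C_p$.

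Next I would pin down the action of a generator $\sigma \in C_p$ on $\pi_* E_{p-1}$ and compute the Tate cohomology $\hat{H}^*(C_p; \pi_* E_{p-1})$. Because $C_p$ is cyclic, this Tate cohomology is $2$-periodic, built over the invariants $(\pi_* E_{p-1})^{C_p}$ from a class $\alpha$ in cohomological degree $1$ and an \emph{invertible} periodicity class $\beta$ in cohomological degree $2$, with the $E_2$-page determined by the norm $N = 1 + \sigma + \cdots + \sigma^{p-1}$ and the modules $\ker N$ and $\mathrm{im}(\sigma - 1)$. The essential algebraic input is the action of $\sigma$ on the coordinates $u_1, \ldots, u_{p-2}$ and $u$; this is precisely the same input that drives the folklore Hopkins--Miller computation of $\pi_* E_{p-1}^{hG}$, which the paper recovers along the way.

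The crucial step is the differential analysis. The homotopy fixed point spectral sequence for $E_{p-1}^{hC_p}$ carries a first nontrivial differential $d_{2p-1}$, whose leading value is, up to a unit, $\alpha\beta^{p-1}$ times a power of $u$; I would show that the identical differential appears in the Tate spectral sequence. In the Tate setting $\beta$ is a unit, so this single multiplicative differential is an isomorphism onto its image across the entire page: propagating it by $\beta$-linearity leaves no classes surviving to $E_{2p}$. Hence $E_\infty = 0$, and granting strong convergence of the Tate spectral sequence—which holds here because the $d_{2p-1}$-differential produces a vanishing line at a finite page and $\pi_* E_{p-1}$ is a complete local ring—one concludes $\pi_* E_{p-1}^{tC_p} = 0$, so $E_{p-1}^{tC_p} \simeq \ast$. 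The retract argument then yields $E_{p-1}^{tG} \simeq \ast$, and contractibility of the cofiber is exactly the assertion that the norm map is an equivalence.

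I expect the main obstacle to be the explicit determination of the $C_p$-action together with the verification that $d_{2p-1}$ is nonzero with the stated value, since detecting its leading term requires controlling the action of $\sigma$ on $\pi_* E_{p-1}$ to sufficiently high order. Once that single differential is established, the $\beta$-periodicity of Tate cohomology forces the vanishing of the whole page essentially for free, which is the structural reason the Tate spectrum—unlike the homotopy fixed points—is contractible.
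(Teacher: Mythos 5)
Your reduction to the Sylow $p$-subgroup $C_p$ via the restriction--transfer retraction is reasonable in principle, and your $E_2$-page description is consistent with the paper's computation $\hat H^*(C_p;A) \simeq \F_{p^n}[a,b^{\pm 1},d^{\pm 1}]/(a^2)$. But your differential analysis contains a genuine gap: the single differential $d_{2p-1}$, even with $\beta$ (and $d$) inverted, does \emph{not} kill the page. The value of this differential is $d_{2p-1}(d) \doteq a b^{p-1}$ (in the paper's $G$-equivariant coordinates, $d_{2p-1}(\Delta) \doteq \alpha\beta^{p-1}$), and its target is a multiple of the nilpotent class $a$ (recall $a^2=0$), not a unit of the page. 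A multiplicative spectral sequence page can only be annihilated by a differential some value of which is a \emph{unit}; no amount of $\beta$-linear propagation of a differential landing in nilpotent classes can achieve this. Concretely, by the Leibniz rule in characteristic $p$ one has $d_{2p-1}(d^{pj}) = pj\, d^{pj-1}ab^{p-1} = 0$, so all $p$-th powers $d^{pj}$ are cycles, and they are not boundaries since every element of the image of $d_{2p-1}$ is divisible by $a$; likewise the classes $a\,b^i d^{pj-1}$ survive (their only potential sources $b^{i-p+1}d^{pj}$ are cycles). Hence $E_{2p} \neq 0$, contrary to your claim.

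What actually forces the vanishing is a \emph{second} differential, which is the crux of the paper's argument: Toda's relation $\beta_1^{p(p-1)+1}=0$ in $\pi_*S^0$ forces $d_{2(p-1)^2+1}(\alpha\Delta^{p-1}) \doteq \beta^{(p-1)^2+1}$ (after the first differential has run). Since $\beta$ is invertible in the Tate spectral sequence, this differential hits a unit of the page, so $1$ becomes a boundary and everything dies; in the homotopy fixed point spectral sequence, where $\beta$ is not invertible, the same differential merely truncates $\beta$-towers, which is exactly why $E_{p-1}^{hG}$ is nontrivial while $E_{p-1}^{tG}$ vanishes. You would need to establish this second differential as well (the paper deduces both from Toda's relations $\alpha_1\beta_1^p = 0$ and $\beta_1^{p(p-1)+1}=0$ together with sparsity). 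A further complication introduced by your Sylow reduction: the sparsity argument used to pin down these differentials relies on the $E_2$-page for the full group $G$ being concentrated in internal degrees $t \equiv 0 \bmod 2(p-1)$; the $C_p$-page has classes in every even internal degree, so the differentials cannot be identified by sparsity there and would have to be imported by restriction from $G$ --- at which point working with $G$ directly, as the paper does, is no extra cost.
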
  
\begin{rem}
	This is also true at $n=1,p=2$ where $E_1^{hC_2}$ can be identified with $2$-adic real $K$-theory (see~\cite{koduality}), although our calculations do not cover this case. 
\end{rem}

It is likely that this result is well-known to the experts, although we have not found a proof in the literature. It is well known that~\Cref{thm:1} holds $K(n)$-locally (for all finite subgroups $K$ and at all heights); one proof follows from the fact that $E_n^{hK} \to E_n$ is a faithful $K(n)$-local $K$-Galois extension and~\cite[Proposition 6.3.3]{rognesgalois}. We prove that, in this particular case, it is true even before any localisation.\footnote{Note that $E_n^{hK}$ is always $K(n)$-local (since $E_n$ is), but the same is not (necessarily) true for $(E_n)_{hK}$ and $E_n^{tK}$.}

 We recall from the discussion above that when $n$ is not a multiple of $p-1$ there are no non-trivial finite $p$-subgroups. In this case a simple argument shows that $E_n^{tK}$ is also contractible (see~\Cref{prop:coprimevanish}).  In fact, using derived algebraic geometry and the affineness machinery of Mathew and Meier~\cite{mmaffine}, Meier has shown the author a proof that the vanishing of the Tate spectrum holds more generally for any finite subgroup $K \subset \G_n$.  

Our method is by brute force; we simply calculate $\pi_*E_{p-1}^{tG}$ using the Tate spectral sequence and show that it vanishes. In fact, the calculation of $\widehat{H}^*(G;(E_{p-1})_*)$ is essentially done in~\cite{MR2066512}, although Symonds does not use this to calculate $\pi_*E_{p-1}^{tG}$. One benefit of this computational approach is that along the way we give a detailed calculation of the homotopy fixed point spectral sequence for $\pi_*E_{p-1}^{hG}$. This calculation has quite a history; it appears to have first been calculated (in unpublished work) by Hopkins and Miller in the '90s; details appeared in Nave's thesis~\cite{nave}, with further details published in~\cite{naveannals}. These calculations have also appeared in the thesis of Hill~\cite{hillthesis} and work of Henn~\cite{HennFinite}. The case of $n=2,p=3$ was considered in detail in~\cite{GHMR,skypaper}, and it is their approach that we follow. Whilst we do not claim any originality for this result, we hope the reader finds it useful to see the calculation written down in detail. 

 The computational method also has the advantage of allowing an easy description of (most of) the homotopy orbit spectral sequence. This can be useful; for example with Stojanoska~\cite{koduality} we used a vanishing result for the Tate spectrum of $KU^{tC_2}$ to calculate the Anderson dual of $KO$. In particular, the spectral sequence for computing the Anderson dual of $KO$ was shown to be the linear dual of the homotopy orbit spectral sequence for $KO$. A similar result is also true for $Tmf[1/2]$~\cite{stojanoska2012duality}. Behrens and Ormsby~\cite{behrensormsby} also use the vanishing of the Tate spectrum $\text{TMF}_1(5)^{t\F_5^\times}$ and the resulting calculation of the homotopy orbit spectral sequence to compute hidden extensions in the homotopy fixed point spectral sequence for $\text{TMF}_0(5)$. 

As an application of these results we have the following partial description of the $K(n)$-local Spanier-Whitehead dual of $E_n^{hG}$, i.e.,
\[
DE_n^{hG} \coloneqq F(E_n^{hG},L_{K(n)}S^0). 
\] 
\begin{cor}
        $E_{p-1}^{hG}$ is $K(p-1)$-locally self-dual up to suspension. In fact $DE_{p-1}^{hG} \simeq \Sigma^N E_{p-1}^{hG}$ where $N \equiv -(p-1)^2 \mod (2p(p-1)^2)$, and $N$ is only uniquely defined modulo $2p^2(p-1)^2$.  
\end{cor}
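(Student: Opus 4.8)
The plan is to combine the norm equivalence of \Cref{thm:1} with Gross--Hopkins duality for $E_{p-1}$ itself, so as to convert the $K(p-1)$-local Spanier--Whitehead dual of a homotopy fixed point spectrum into the homotopy fixed points of a shifted, twisted copy of $E_{p-1}$. The first step is to dualize through the norm map. Since $L_{K(p-1)}S^0$ carries the trivial $G$-action, the standard adjunction for the Borel construction identifies the dual of homotopy orbits with the fixed points of the dual:
\[
F\bigl((E_{p-1})_{hG}, L_{K(p-1)}S^0\bigr) \simeq F\bigl(E_{p-1}, L_{K(p-1)}S^0\bigr)^{hG} = (DE_{p-1})^{hG}.
\]
By \Cref{thm:1} the norm map $(E_{p-1})_{hG}\to E_{p-1}^{hG}$ is an equivalence, so applying $F(-,L_{K(p-1)}S^0)$ and combining with the line above yields $DE_{p-1}^{hG} \simeq (DE_{p-1})^{hG}$. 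This is the crucial structural input: the (hard to access) dual of the fixed points is replaced by the fixed points of the dual, which can be analysed $G$-equivariantly.

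Next I would feed in Gross--Hopkins duality. In the form recorded by Strickland and by Hovey--Strickland, $E_{p-1}$ is $K(p-1)$-locally self-dual up to a shift and a twist by a power of the determinant representation: as $\G_{p-1}$-equivariant Morava modules,
\[
DE_{p-1} \simeq \Sigma^{-(p-1)^2} E_{p-1}\sdet,
\]
the shift $-(p-1)^2 = -n^2$ being exactly the Gross--Hopkins period at height $n=p-1$. Restricting the action along $G\hookrightarrow \G_{p-1}$ and taking homotopy fixed points gives
\[
DE_{p-1}^{hG} \simeq \Sigma^{-(p-1)^2}\bigl(E_{p-1}\sdet\bigr)^{hG},
\]
and it remains to identify $(E_{p-1}\sdet)^{hG}$ as a suspension of $E_{p-1}^{hG}$.

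The crux, and what I expect to be the main obstacle, is this twist analysis together with the precision it affords. Restricted to the finite group $G$, the determinant twist is detected only at the level of the $G$-invariant Morava module, where it is realized by a power of the algebraic periodicity class — the invariant invertible element living in degree $2p(p-1)^2$ in the computation of $\pi_*E_{p-1}^{hG}$ carried out in the body of the paper. Consequently $(E_{p-1}\sdet)^{hG}\simeq \Sigma^{2m}E_{p-1}^{hG}$ with $2m\equiv 0 \pmod{2p(p-1)^2}$, whence $N\equiv -(p-1)^2 \pmod{2p(p-1)^2}$. Finally, the explicit homotopy fixed point spectral sequence shows that this algebraic periodicity class is \emph{not} itself a permanent cycle — only its $p$-th power survives — so the genuine periodicity of $E_{p-1}^{hG}$ is $2p^2(p-1)^2$. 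This is precisely why $N$ is pinned down only modulo $2p(p-1)^2$ yet is well-defined, as a label for the suspension, only modulo $2p^2(p-1)^2$, the residual factor of $p$ measuring the gap between the algebraic and topological periodicities. The delicate points — justifying the determinant-twist identification from the explicit $G$-action on $\pi_*E_{p-1}$, and controlling exactly this precision — rely on the spectral-sequence computation rather than on any formal argument, which is why I would not expect a purely structural proof to recover the mod $2p(p-1)^2$ statement on its own.
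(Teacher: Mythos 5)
Your overall skeleton does match the paper's proof: dualize through the norm equivalence of \Cref{thm:1} to get $DE_{p-1}^{hG}\simeq(DE_{p-1})^{hG}$, feed in Strickland's self-duality of $E_{p-1}$, pin the shift modulo the periodicity $2p(p-1)^2$ of the $E_2$-term, and observe that it is only well-defined modulo the topological periodicity $2p^2(p-1)^2$. The genuine problem is your equivariant input, in two linked places. First, the statement $DE_n\simeq\Sigma^{-n^2}E_n\sdet$ as $\G_n$-equivariant objects is not what Strickland or Hovey--Strickland prove: the determinant twist belongs to Gross--Hopkins/Brown--Comenetz duality, i.e.\ to $I_n\wedge E_n\simeq\Sigma^{n^2-n}E_n\sdet$, not to the $K(n)$-local Spanier--Whitehead dual $F(E_n,L_{K(n)}S^0)$. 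What the paper uses is only the \emph{non-equivariant} equivalence $DE_n\simeq\Sigma^{-n^2}E_n$, together with the fact that the dual spectral sequence is a free rank-one module over the fixed-point spectral sequence, which is what pins the shift modulo the $E_2$-periodicity.

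Second, and decisively, your step asserting that $\sdet$ restricted to $G$ shifts the $E_2$-term by a multiple of $2p(p-1)^2$ is false once $p\geq 5$, so your two inputs jointly contradict the congruence you are trying to prove. Since $\Z_p^\times$ has no $p$-torsion, $\det(\zeta)=1$, so the restriction of $\det$ to $F$ is determined by $\det(\tau)=N_{\W/\Z_p}(\eta)=\eta^{1+p+\cdots+p^{n-1}}$, and
\[
1+p+\cdots+p^{n-1}\equiv n+\tfrac{n^2}{2} \pmod{n^2},
\]
which vanishes mod $n^2$ exactly when $n=2$, i.e.\ when $p=3$. For $p\geq5$ the $\tau$-eigenvalue equation $\eta^{pj}\det(\tau)=1$ places the generator of $H^0\bigl(F,S(\rho)[N^{-1}]\sdet\bigr)$ at $d^{j_0}$ with $j_0\equiv \tfrac{n^2}{2}-n\not\equiv 0\pmod{n^2}$, so the twisted $E_2$-term is the untwisted one shifted by $2pn-pn^2\not\equiv 0\pmod{2pn^2}$, and your method would output $N\equiv -n^2+2pn-pn^2\pmod{2pn^2}$ (at $p=5$: $-56$ rather than $-16$ modulo $160$), contradicting the corollary. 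The case $p=3$, where $\det(\tau)=1$ and Behrens' $DE_2^{hG_{24}}\simeq\Sigma^{44}E_2^{hG_{24}}$ is consistent with everything, is precisely what masks this error. The correct equivariant refinement, established later by Beaudry--Goerss--Hopkins--Stojanoska, twists Spanier--Whitehead duality by the \emph{adjoint representation} sphere $S^{-\mathfrak{g}}$ rather than the determinant sphere; what the $E_2$-term then sees is the character $\det(\mathrm{Ad})$, which is genuinely trivial (for $g$ in a division algebra, $\mathrm{Ad}(g)=L_g R_{g^{-1}}$ has determinant $\mathrm{Nrd}(g)^n\mathrm{Nrd}(g)^{-n}=1$), and this is exactly what legitimizes the untwisted identification $H^*(G,\pi_*DE_n)\cong H^*(G,\Sigma^{-n^2}(E_n)_*)$ that the paper's proof (implicitly) relies on.
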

We can, in fact, show that $E_n^{hK}$ is always self-dual up to some suspension, although we cannot be quite so precise as above, unless $K$ is coprime to $p$, in which case $DE_n^{hK} \simeq \Sigma^{-n^2}E_n^{hK}$. 

\section*{Acknowledgments}
This work formed part of the author's PhD at Melbourne University under Craig Westerland. It is a pleasure to again acknowledge the encouragement and support of Craig, for which this work would not exist without. Many of the techniques in this paper we learnt from~\cite{skypaper}. We also thank Hans-Werner Henn and Justin Noel for helpful comments. The preparation of this note was supported by a David Hay postgraduate writing-up award provided by Melbourne University. We thank the Max Planck Institute in Bonn for its hospitality.
 \section{Finite subgroups of the Morava stabilizer group}
Let $\Gamma_n$ denote the height $n$ Honda formal group law over $\F_{p^n}$ with $p$-series $[p](x) = x^{p^n}$. We let $\S_n \coloneqq \operatorname{Aut}(\Gamma_n)$ denote the $n$-th Morava stabilizer group. By Lubin-Tate theory~\cite{lubtate} there is a ring $(E_n)_0 = \W(\F_{p^n})[\![u_1,\ldots,u_{n-1}]\!]$  with an action of $\S_n$, where $\W(\F_{p^n})$ denotes the Witt vectors, whose associated formal group law is the universal deformation of the Honda formal group law. By the Landweber exact functor theorem there is a cohomology theory with coefficient ring $(E_n)_* = (E_n)_0[u^{\pm 1}]$, where $|u|=-2$, and the action of $\S_n$ extends to an action on $(E_n)_*$. We let $E_n$ denote the representing spectrum.  The Honda formal group law is defined over the field $\F_{p^n}$, and so we can consider the group $\G_n = \S_n \rtimes \Gal(\F_{p^n}/\F_{p})$, known as the extended Morava stabilizer group. This also acts on $(E_n)_*$, and hence $E_n$,  and furthermore the Goerss--Hopkins--Miller theorem~\cite{rezkhm,gh04} proves that this action can be lifted to a spectrum level action via $E_\infty$-ring maps.
 
 There is an alternative description of ring $\S_n$. Write $\W$ for the Witt vectors over $\F_{p^n}$ and define a ring
 \[
 \mathcal{O}_n = \W\langle S \rangle/(S^n-p),
 \]
 where the angled brackets refer to adjoining a non-commuting power series variable $S$ subject to the condition that 
 \[
Sa = a^\phi S,
 \]
 for $a \in \W$ and $a^\phi:\W \to \W$ is the lift of Frobenius to the Witt vectors. The algebra $\mathcal{O}_n$ is a free rank $n$ module over $\W$ with generators $1, S, \ldots,S^{n-1}$, and is
the ring of integers in a division algebra $\mathbb{D}_n$ over $\Q_p$ of dimension $n^2$ and Hasse invariant
$1/n$. With this notation, $\S_n = \mathcal{O}_n^\times$. 
 
We will be interested in the maximal finite $p$-subgroups (up to conjugacy) of $\G_n$ at height $n=p-1$; as mentioned previously, this is an important case due to the presence of $p$-torsion in $\G_n$, which implies that the cohomological dimension of $\G_n$ is infinite.  These sometimes go by the name of the higher real $K$-theories (although we refrain from using this), and are sometimes denoted $EO_{n}$, for reasons that the following simple example explains. 
\begin{example}
Let $n=1$ and $p=2$. For $n=1$, the Honda formal group law over $\F_2$ is isomorphic to the multiplicative formal group law:
\[
\G_m(x,y) = x+y-xy.
\]

There is an isomorphism $\End_{\F_2}(\G_m) \simeq \Z_2$,
 and so $\Aut_{\F_2}(\G_m) = \Z_2^\times \simeq \Z_2 \times C_2$, where $C_2$ is generated by -1. The universal deformation of $\G_m$ is once again $\G_m$, now considered over $\Z_2$. Finally then, by adjoining a degree 2 element $u$, we see that $E_1$ is nothing other than $p$-completed complex $K$-theory, and the action of $\G_1$ is just the ($p$-adic) Adams operations. 
 
 There is a unique maximal finite subgroup of $\Z_2^\times$, namely $C_2 \simeq \{ \pm 1\}$, and so we can form the homotopy fixed point spectrum $E_1^{hC_2}$. By calculating the homotopy groups of this spectrum  we can see that this is actually 2-complete real $K$-theory; $E_1^{hC_2} \simeq KO^{\wedge}_2$ (see, for example,~\cite[5.5.2]{rognesgalois}).  

 \end{example}
We now move to the more general case, although we fix $n=p-1$ for the remainder of this section, where $p$ is an odd prime. The maximal finite subgroups of $\S_n$ were first classified by Hewett, in his work on subgroups of finite division algebras~\cite{hewett}, whilst the case of $\S_n$ and $\G_n$ has also been studied by Bujard~\cite{bujard2012finite}. We will be interested in finite subgroups of order divisible by $p$.  

By~\cite[Theorem 1.3.1]{bujard2012finite} or~\cite[Theorem 1.3]{hewett}, when $n=p-1$ there are exactly 2 conjugacy classes of maximal finite subgroups of $\S_n$, represented by
\[
F_0 = C_{p^n-1}
\]
and
\[
 F=F_1 = C_p \rtimes C_{n^2} \simeq C_p \rtimes C_{{(p-1)}^2},
\]
where the action of the right factor on the left is given by the mod $(p-1)$ reduction map $C_{(p-1)^2} \to C_{{p-1}} \simeq \operatorname{Aut}(C_p)$.

 We choose a presentation of the group $F$ as the following
 \[
 F = \langle \zeta,\tau | \zeta^p=1,\tau^{n^2}=1,\tau^{-1} \zeta \tau = \zeta^e \rangle,
 \]
where $e \in (\Z/p)^\times$ is a generator.

It is worthwhile to see how these finite subgroups arise. Again, we refer the reader to the standard references~\cite{hewett,bujard2012finite} for more detailed information, as well as~\cite[Section 3.6]{HennFinite}. 

Let $\omega \in \W^{\times} \subset \S_n$ be a $(p^n-1)$-st root of unity. Then $X \coloneqq \omega^{{(p-1)}/2}S \in \mathbb{D}_n$ is an element such that $X^n = -p$. It can be shown that the subfield $\Q_p(X) \in \mathbb{D}_n$ is isomorphic to the cyclotomic extension $\Q_p(\zeta)$ generated by a primitive $p$-th root of unity $\zeta$~\cite[Lemma 19]{HennFinite}. A simple check shows that $X \omega X^{-1} = \omega^p$. Then, if we let $\eta = \omega^{(p^n-1)/n^2} \in \W$, we have \[
\eta X \eta^{-1} = \eta^{-n} X;
\]
since $\eta^{-n}$ is an $n$-th root of unity, conjugation by $\eta$ induces an automorphism of $\Q_p(X)$.  

 We will write $\tau$ for the image of $\eta$ in $\S_n$. The subgroup generated by $\zeta$ and $\tau$ is precisely the maximal finite $p$-subgroup of $F$ of $\S_n$. That this is unique up to conjugacy follows from an argument using the Skolem-Noether theorem, as in~\cite[Example 1.33]{bujard2012finite}. This relies on the fact that $X \eta X^{-1} = \eta^p$, which implies that the subgroup $F$ is normalised by $X$ (note that $X$ commutes with $\zeta$).
 
 \begin{example}
	Let $n=2,p=3$ and $\omega$ be a primitive 8-th root of unity. Then the element $\zeta=-\frac{1}{2}(1+\omega S)$ is an element of order 3 and $\omega^2 \zeta \omega^{-2} =\zeta^2$, so that $F$ is generated by $\zeta$ and $\tau:= \omega^2$. Here $F$ usually goes by the name $G_{12}$ and is abstractly isomorphic to the non-trivial semi-direct product $C_3 \rtimes C_4$.
\end{example}

We wish to extend this to $\G_n$; that is, to find a group $G$ such that there is an extension of the form
\[
1 \to F \to G \to \Gal(\F_{p^n}/\F_p) \to 1
\]
where $F$ is a maximal finite subgroup in $\S_n$. This is, in fact, thoroughly analysed in~\cite[Chapter 4]{bujard2012finite}, although we can analyse our case more simply, following~\cite{MR2066512}. Let $N_{\S_n}(\langle \zeta \rangle)$ be the normalizer of the subgroup of order $p$ inside $\S_n$.  Every conjugate of $F$ inside $\G_n$ is in fact in $\S_n$~\cite{MR2066512}; from the discussion above it is therefore conjugate to $F$ inside $\S_n$. This implies that $N_{\G_n}(F)/N_{\S_n}(F) \simeq \Gal$. We choose $c$ to have image $\sigma$ (the Frobenius), under this isomorphism. Then $F$ and $c$ generate a subgroup $G$ of order $pn^3$. In fact, as in~\cite{MR2066512} we can choose $c$ such that it acts trivially on $\langle \zeta \rangle$ and for $\langle c \rangle$ to have no pro-$p$ part. 

\begin{example}
	Again let $n=2,p=3$ and define $\psi = \omega \phi$, where $\phi$ is the generator of the Galois group. The group generated by $\psi,\zeta$ and $\tau$ goes by the name $G_{24}$. The group generated by $\tau$ and $\psi$ is the quaternion group $Q_8$ and there is an abstract isomorphism $G_{24} \simeq C_3 \ltimes Q_8$.
\end{example}
\section{The Tate spectrum}\label{sec:tate}
We start by reviewing some preliminaries on equivariant stable homotopy theory; for more details we direct the reader to the usual references such as~\cite{lms86}. As usual, if $V$ is a representation of a group $G$, we let $S^V$ denote the one-point compactification of $\R^n$ as a $G$-space. We then let $\Sigma^V(-) \coloneqq S^V \wedge -$ and $\Omega^v(-) \coloneqq F(S^V,-)$ be the suspension and loop space functors respectively.  
\begin{definition}
    A $G$-universe $U$ is a countably infinite real inner product space with an action of $G$ through linear isometries such that:
    \begin{enumerate}
         \item $U$ is the sum of countably many copies of a set of representations of $G$; and,
         \item $U$ contains the trivial representation.
     \end{enumerate} \end{definition}
If $U$ contains a copy of every irreducible representation of $G$, then $U$ is called complete; conversely, it is called trivial if it only contains the trivial representations. 

A $G$-spectrum indexed on $U$ consists of based $G$-spaces $EV$ for each indexed space $V \in U$ together with a system of based $G$-homeomoprhisms
\[
\tilde\sigma: EV \xr{\simeq} \Omega^{W-V}EW
\]
for $V \subset W$. 

A genuine $G$-spectrum is one indexed on a complete universe; a naive $G$-spectrum is one indexed on a trivial universe. In particular, a naive $G$-spectrum consists of a sequence of based left $G$-spaces $E_k$, along with based $G$-maps $\Sigma E_k \to E_{k+1}$, whose adjoints are homeomorphisms. Let us fix an identification $U^G = \R^\infty$ and write $G\mathcal{SU}$ and $G\mathcal{R^\infty}$ for the category of genuine and naive $G$-spectra spectra respectively. 

There is a restriction of universe functor $i^\ast:G\mathcal{SU} \to G\mathcal{R^\infty}$ from genuine $G$-spectra to naive $G$-spectra which has a left adjoint $i_*$~\cite[Chapter II]{lms86}. Likewise, there is a functor $\text{Sp} \to G\mathcal{R^\infty}$ from spectra to naive $G$-spectra, giving a spectrum trivial $G$-action; this has a right adjoint given by fixed points $(-)^G$. There is one more adjunction we need to consider; naive $G$-spectra can be considered as presheaves of spectra on $\mathcal{O}(G)$, the orbit category of $G$. The spectrum $E_n$ with its $G$ action is an object of the category of presheaves of spectra on $BG$; the forgetful functor from naive $G$-spectra to this later category has a left adjoint, and we will always implicitly use this to give $E_n$ the structure of a naive $G$-spectrum.

For a naive $G$-spectrum $E$, we define homotopy orbit spectra and homotopy fixed points of spectra in the usual way:
\[
E_{hG} = (EG_+ \wedge Y)/G, \quad \text{ and } \quad E^{hG} = F(EG_+,Y)^G.
\]
Following~\cite{greenlees1995generalized,ademcohendwyer,weisswilliams} (amongst other sources) there is a norm map\footnote{A comparison of these norm maps is provided in~\cite[Section I.V]{greenlees1995generalized}.}
\[
N(E): E_{hG} \to E^{hG}
\]
between homotopy orbits and homotopy fixed points. 

Based on work of Klein~\cite{klein}, Kuhn has proved the following characterisation of the norm map.
\begin{prop}\cite[Proposition 2.3]{kuhntatecohom}
     Let $N(E),N'(E):E_{hG} \to E^{hG}$ be natural transformations such that both $N(\Sigma^\infty G_+)$ and $N' (\Sigma^\infty G_+)$ are weak equivalences. Then there is a unique weak natural equivalence $f(E):E_{hG} \xr{\sim} E_{hG}$ such that the following diagram commutes:
\[
\xymatrix{
    E_{hG} \ar[r]^{N(E)} \ar[dr]_{f(E)} & E^{hG} \\
    & E_{hG}. \ar[u]_{N'(E)}
}
     \]
 \end{prop}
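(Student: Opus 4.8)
The plan is to exploit the fact that the free orbit $\Sigma^\infty G_+$ generates the category of (naive) $G$-spectra under homotopy colimits, so that a natural transformation out of the functor $(-)_{hG}$ is rigidly determined by its value on this single cell. First I would record the structural features in play: the homotopy orbit functor $E \mapsto E_{hG}$ is built from the colimit $(EG_+ \wedge -)/G$, so it preserves all homotopy colimits, and both $(-)_{hG}$ and $(-)^{hG}$ are exact, sending cofiber sequences of $G$-spectra to cofiber sequences. Next I would compute the value on the free cell: since $(EG_+ \wedge \Sigma^\infty G_+)/G \simeq \Sigma^\infty ((EG \times G)/G)_+ \simeq \Sigma^\infty EG_+ \simeq \S$, one has $(\Sigma^\infty G_+)_{hG} \simeq \S$. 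By hypothesis both $N(\Sigma^\infty G_+)$ and $N'(\Sigma^\infty G_+)$ are weak equivalences out of this cell, and by naturality together with stability they are equivalences on all of its (de)suspensions as well. Hence $f(\Sigma^\infty G_+) \coloneqq N'(\Sigma^\infty G_+)^{-1} \circ N(\Sigma^\infty G_+)$ is a well-defined self-equivalence of $(\Sigma^\infty G_+)_{hG}$.

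The crux is then a classification (co-Yoneda / Eilenberg--Watts) statement: because $(-)_{hG}$ preserves homotopy colimits and $\Sigma^\infty G_+$ is a compact generator with endomorphism ring the spherical group algebra $\End_G(\Sigma^\infty G_+) \simeq \S[G]$, natural self-transformations of $(-)_{hG}$ are identified with the $\S[G]$-equivariant self-maps of the free-cell value. I would use this to extend $f(\Sigma^\infty G_+)$ uniquely to a natural transformation $f(E) \colon E_{hG} \to E_{hG}$, propagating the map along the skeletal filtration of an arbitrary $G$-CW spectrum, using exactness on each cofiber sequence of cells and colimit-preservation in the limit, with the value on each free cell forced by naturality from the value on $\Sigma^\infty G_+$. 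Since $f$ is an equivalence on every free cell and both functors are exact, $f(E)$ is then an equivalence for all $E$. Applying the same density argument to the two natural transformations $N$ and $N' \circ f$ into $(-)^{hG}$ — which agree on $\Sigma^\infty G_+$ by construction — yields $N = N' \circ f$, and simultaneously gives uniqueness of $f$.

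The main obstacle is precisely this rigidity step, and it is where the hypothesis that the two maps are equivalences on the free cell does its work. Two points require care. First, the classification of natural transformations by their restriction to the generator is clean for the colimit-preserving functor $(-)_{hG}$, but the comparison $N = N' \circ f$ has target $(-)^{hG}$, which does not commute with arbitrary colimits; to run the uniqueness argument on that side I would either restrict to finite (dualizable) $G$-spectra, where the norm is an equivalence and orbits and fixed points agree, and then argue that equality on finite cells forces equality in general, or else invoke Klein's analysis of the space of norm maps via the dualizing spectrum, which directly exhibits this space as essentially rigid. Second, passing from finite cells to arbitrary $G$-spectra requires controlling the relevant $\lim^1$ and completion phenomena; I would handle these by phrasing the generation statement in the homotopy category with the standard cellular filtration and checking compatibility of both sides with the filtered colimits involved.
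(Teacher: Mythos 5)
The paper offers no proof of this statement: it is quoted directly from Kuhn (Proposition 2.3 of the cited paper, building on Klein), so your proposal can only be compared with that literature rather than with an argument in the text. Your overall strategy is the right one, and it is essentially the strategy underlying the cited proof: $\Sigma^\infty G_+$ compactly generates naive (Borel) $G$-spectra, $(-)_{hG}$ preserves homotopy colimits and takes the value $\mathbb{S}$ on the generator, the hypothesis makes $N$ and $N'$ invertible on finite free cells, and one transports $f = (N')^{-1}\circ N$ from there to all $G$-spectra.

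However, the execution of the rigidity step, as you describe it, has a genuine gap. Extending $f$ ``along the skeletal filtration, using exactness on each cofiber sequence of cells'' cannot be carried out in the homotopy category: given a cofiber sequence $E' \to E \to E''$ and compatible choices of $f(E')$ and $f(E'')$, a filler $f(E)$ exists but is never unique (any two fillers differ by a map factoring through $\Sigma E'_{hG}$), the naturality squares commute only up to unspecified homotopies, and for infinite complexes phantom maps and $\lim^1$ terms intervene. So this procedure neither well-defines $f$ nor --- crucially --- proves the uniqueness assertion, which is the actual content of the proposition. What repairs it is homotopy-coherent naturality: either Kuhn's point-set-level argument, or the statement that $(-)_{hG}$, since it preserves filtered colimits, is the left Kan extension of its restriction to compact (finite free) $G$-spectra, whence natural transformations from $(-)_{hG}$ to \emph{any} target functor are determined, coherently and uniquely, by their restriction to those compacts. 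Note that this also dissolves the obstacle you flag about $(-)^{hG}$ not preserving colimits: only the source of the transformations needs to be a Kan extension, so the comparison $N = N' \circ f$ and the uniqueness of $f$ follow from the same universal property, with no restriction to dualizable objects and no appeal to Klein's dualizing spectrum. Relatedly, your Eilenberg--Watts identification must be taken in the coherent sense: the spectrum of natural self-transformations of $(-)_{hG}$ is $F(BG_+,\mathbb{S})$, not $\mathbb{S}$, which is precisely the quantitative statement that a homotopy class of self-maps of the value on the single cell does not by itself determine a natural transformation.
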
 
 \begin{definition}
    Let the Tate spectrum, $E^{tG}$ be the cofiber of $N(E):E_{hG} \to E^{hG}$. 
\end{definition}
Let $E\tilde G$ be defined via the cofiber sequence
\[
EG_+ \to S^0 \to \tilde{E} G,
\]  where the first map sends $EG$ to the non-base point of $S^0$. By work of Greenlees and May~\cite{greenlees1995generalized}, $E^{tG}$ can equivalently be defined by
\[
E^{tG} \coloneqq (\tilde EG \wedge F(EG_+,X))^G,
\] 
although what we call $E^{tG}$ would be referred to be Greenlees and May as $t_G(E)^G$. Taking $G$-fixed points ensures that $E^{tG}$ ends up in the category of ordinary, non-equivariant, spectra. Note that although Greenlees and May work with genuine $G$-spectra,~\cite[Lemma I.1.3]{greenlees1995generalized} implies that the homotopy type of $t_E(G)$ depends only on the underlying naive $G$-spectrum of $E$ (see also~\cite[pp. 884]{snlrognes}).

Applying the results of~\cite{greenlees1995generalized} there is then a spectral sequence
\[
\hat H^s(G,\pi_tE) \Rightarrow \pi_{t-s}E^{tG},
\]
where $\hat H$ denotes Tate cohomology~\cite{cassfroh}. When $E$ is a ring spectrum, so are $E^{hG}$ and $E^{tG}$, and the homotopy fixed point and Tate spectral sequences are spectral sequences of differential algebras, with a natural map between them that is compatible with the differential algebra structure. Proofs of these claims can be found in~\cite{greenlees1995generalized}. 

We can already prove a vanishing result for the homotopy of the Tate spectrum $E_n^{tG}$ for a large class of finite subgroups of $\G_n$. For example, as noted in the introduction, when $p \gg n$ this applies to all finite subgroups. 
\begin{prop}\label{prop:coprimevanish}
    If $G \subset \G_n$ is a finite subgroup with $|G|$ coprime to $p$, then $E_n^{tG} \simeq \ast$. 
\begin{proof}
    Since $|G|$ is coprime to $p$, $\pi_*((E_n)_{hG}) = (\pi_*E_n)_G$ and $\pi_*(E_n^{hG}) = (\pi_*E_n)^G$. There is a map
    \[
\pi_*E_n \to (\pi_*E_n)_G \xr{N} (\pi_*E_n)^G \to \pi_*E_n,
    \]
where $N$ is the usual norm map from coinvariants to invariants. This map is given given by multiplication by $|G|$, which is invertible, and hence the norm map must be an equivalence.  
\end{proof}
\end{prop}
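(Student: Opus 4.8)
The plan is to show that the norm map $N:(E_n)_{hG} \to E_n^{hG}$ is an equivalence, since $E_n^{tG}$ is by definition its cofiber. The essential input is that $|G|$ is coprime to $p$ while $\pi_*E_n$ is a $p$-local ring (indeed $p$-complete, being built from the Witt vectors $\W(\F_{p^n})$), so that $|G|$ acts invertibly on each $\pi_tE_n$.

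First I would record the relevant Maschke-type vanishing: for a $\Z[G]$-module $M$ on which $|G|$ acts invertibly, both $H^s(G;M)$ and $H_s(G;M)$ vanish for $s>0$. Applying this to $M=\pi_tE_n$ for every $t$, the homotopy fixed point spectral sequence $H^s(G;\pi_tE_n)\Rightarrow \pi_{t-s}E_n^{hG}$ and the homotopy orbit spectral sequence $H_s(G;\pi_tE_n)\Rightarrow \pi_{s+t}(E_n)_{hG}$ are each concentrated on a single line and therefore collapse. This identifies $\pi_*E_n^{hG}$ with the invariants $(\pi_*E_n)^G$ and $\pi_*(E_n)_{hG}$ with the coinvariants $(\pi_*E_n)_G$. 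I would then identify the map induced by $N$ on homotopy groups with the algebraic norm $(\pi_*E_n)_G \to (\pi_*E_n)^G$, a standard compatibility between the topological and algebraic norms. The composite $\pi_*E_n \to (\pi_*E_n)_G \xr{N}(\pi_*E_n)^G \hookrightarrow \pi_*E_n$ is then multiplication by $|G|$, which is invertible, so the norm is an isomorphism on homotopy and $N$ is an equivalence.

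Alternatively, and perhaps more efficiently given that the Tate spectral sequence has just been introduced, one can argue directly: in $\hat H^s(G,\pi_tE_n)\Rightarrow \pi_{t-s}E_n^{tG}$ every Tate cohomology group is annihilated by $|G|$, which here acts invertibly, so the entire $E_2$-page vanishes and $\pi_*E_n^{tG}=0$. In either approach the mathematics is routine, so I expect no serious obstacle; the only points requiring any care are the $p$-locality of $\pi_*E_n$ that guarantees invertibility of $|G|$, and the identification of the topological norm with its algebraic shadow.
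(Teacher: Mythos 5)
Your main argument is exactly the paper's proof: identify $\pi_*(E_n)_{hG}$ and $\pi_*E_n^{hG}$ with coinvariants and invariants (the Maschke-type collapse you spell out is what the paper uses implicitly), then note that the composite with the algebraic norm is multiplication by the invertible integer $|G|$. Your alternative via the vanishing of the Tate cohomology groups $\hat H^*(G;\pi_*E_n)$ on the $E_2$-page of the Tate spectral sequence is also correct and equally routine, so there is nothing to add.
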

\section{A description of $E_n$ as a $G$-module}\label{sec:eng}
Hopkins and Devinatz~\cite{hd95} have calculated the action of the ring $\S_n$ on $(E_n)_*$, but the formulas are difficult to compute with directly. The calculations of Nave and others rely on the idea that there is a nicer presentation of $(E_n)_*$ for which the action of a finite subgroup $K \subset \G_n$ is easier to describe. 

The following conjecture of Hopkins was presented by Mike Hill at Oberwolfach.  
\begin{conjecture}[Hopkins, \cite{hillMFO}]\label{conj:hopkins}
    If $K \subset \S_n$ is a finite subgroup, then there is $K$-equivariant isomorphism
    \[
    (E_n)_*\simeq S_{\W(\F_{p^n})}(\rho)[N^{-1}]^\wedge_I,
    \]
    where $\rho \in E_n$ is placed in degree -2, $S(\rho)$ denotes the graded symmetric algebra, $N$ is a trivial representation corresponding to the multiplicative norm over the group on $\rho$ and $I$ is an ideal in degree 0. 
\end{conjecture}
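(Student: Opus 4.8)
The plan is to construct the required $K$-module structure directly from Lubin--Tate theory, identifying the representation carried by $\rho$ with the natural action of $K$ on the deformation parameters and then recovering all of $(E_n)_*$ by the formal operations of inverting a norm and completing. I would start from the action of $\S_n$ on $(E_n)_0 = \W(\F_{p^n})[\![u_1,\dots,u_{n-1}]\!]$ coming from the universal deformation of $\Gamma_n$, together with the character by which $\S_n$ acts on the invertible degree $-2$ class $u$ through the leading coefficient of the canonical isomorphism of formal group laws. The first step is to produce an $n$-dimensional $\W(\F_{p^n})$-representation $V$ of $K$, placed in degree $-2$, whose projectivization recovers the deformation space: for a $\W$-basis $\rho_1,\dots,\rho_n$ of $V$ the ratios $\rho_i/\rho_n$ land in degree $0$ and should be identified, after completion, with $u_1,\dots,u_{n-1}$, while a single degree $-2$ coordinate plays the role of $u$. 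This is the geometric content of inverting the norm $N$: it passes from the homogeneous symmetric algebra to an affine chart on which $u$ becomes invertible, just as one passes from $\mathrm{Proj}$ to $\Spec$, and I would take $\rho$ to be this basis.

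Next I would make the $K$-action on $V$ explicit using the division-algebra description $\mathcal{O}_n = \W\langle S\rangle/(S^n-p)$ and the elements $X,\eta,\zeta$ above. The cyclic part $\langle\tau\rangle$ acts semisimply: since $\tau=\eta$ scales $X$ by the $n$-th root of unity $\eta^{-n}$, its action on $V$ is diagonalisable with eigenvalues running through powers of a primitive $n$-th root of unity, so one may take the $\rho_i$ to be eigenvectors. The element $\zeta$ of order $p$ then acts compatibly with the relation $\tau^{-1}\zeta\tau=\zeta^e$, and the whole point is to see that it acts \emph{linearly} on this eigenbasis. Granting this, $S_{\W(\F_{p^n})}(\rho)$ is a polynomial ring on which $K$ acts through $V$, the multiplicative norm $N=\prod_{g}g\rho_1$ is by construction $K$-invariant (a trivial representation), and inverting it renders $u$ invertible, since $N$ agrees up to a graded unit with a power of $u$.

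To finish I would compare the two $K$-equivariant graded rings. Inverting $N$ turns the non-negatively graded symmetric algebra into a $\Z$-graded ring, and completing at the degree-zero ideal $I$ generated by the images of the $\rho_i/\rho_n$ reproduces the power-series completion defining $(E_n)_0$. Modulo $I$ both sides reduce to the Honda special fibre, where the $K$-action is pinned down by the semisimple-plus-unipotent description above; I would then lift the isomorphism across the $I$-adic filtration by a Nakayama/Hensel argument, using that both rings are $I$-complete and that $K$-equivariance is a closed condition preserved under passage to the completion.

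The main obstacle is controlling the order-$p$ element $\zeta$. The Hopkins--Devinatz formulas for its action on $(E_n)_*$ are genuinely nonlinear, so the real content of the conjecture is that these higher-order corrections can be absorbed into a \emph{linear} action on a symmetric algebra after a suitable change of coordinates; producing such coordinates --- equivalently, linearising the $p$-torsion action --- is exactly what is not routine, and is why the statement is only a conjecture in general. In the case $n=p-1$ relevant here it should nonetheless be checkable by hand, using the isomorphism $\Q_p(X)\cong\Q_p(\zeta)$ to pin down how $\zeta$ acts on the explicit $\tau$-eigenbasis, and that is the route I would actually carry out.
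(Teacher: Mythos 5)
Your outline follows the same broad strategy as the paper's sketch (build a linear degree $-2$ representation, take its symmetric algebra, invert a multiplicative norm, complete, and compare with $(E_n)_*$ via the Devinatz--Hopkins formulas modulo $(p,\mathfrak{m}^2)$), but it has a genuine gap at exactly the point you flag and then set aside with ``Granting this.'' The entire content of the statement is the existence of a \emph{linearizing} coordinate: a class $z \in E_{-2}$ with $z \equiv cu \bmod (p,\mathfrak{m}^2)$, $c \in \W^\times$, satisfying $\tau(z) = \eta z$ and $(1+\zeta+\cdots+\zeta^{p-1})z = 0$, together with companion classes $z_1,\ldots,z_{n-1}$ on which $\zeta$ acts by the expected unipotent pattern $(\zeta-1)z = z_{n-1}$, $(\zeta-1)z_{i+1} = z_i$. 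Saying that this ``should be checkable by hand'' is not a proof; the paper fills this hole by invoking a lemma of Hopkins, written up by Nave \cite{nave,naveannals}, which produces precisely these classes, and then defines the map $\rho \to E_{-2}$ by sending a generator to $z$. Without that input nothing forces the nonlinear $C_p$-action (visible in the unitriangular-matrix description of $\zeta$ acting on $u, uu_{n-1},\ldots,uu_1$ modulo $(p,\mathfrak{m}^2)$) to be conjugate to a linear one, and your Nakayama/Hensel step has nothing to lift: equivariance of the isomorphism is exactly the condition that does not come for free from $I$-completeness.

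A second, smaller defect: you never pin down which $K$-representation $V$ is, and the answer matters. In the paper $\rho$ is not just ``some $n$-dimensional representation with a $\tau$-eigenbasis''; it is the cokernel of the canonical map $\chi \to \W[F]\otimes_{\W[C_{n^2}]}\chi'$, i.e., the quotient of an induced representation by a copy of $\chi$. This builds in the additive relation $(1+\zeta+\cdots+\zeta^{p-1})z = 0$, which is forced on the $E$-theory side by the computation $u + \zeta_*u + \cdots + \zeta^{p-1}_*u \equiv 0 \bmod (p,\mathfrak{m}^2)$, and it is what makes the norm $N$ land on an invertible element of $(E_n)_*$. Your eigenvalue claim is also off: $\tau$ acts on the relevant degree $-2$ classes with eigenvalues $\eta^{p^i}$, which are primitive $n^2$-th roots of unity (powers of $\eta$), not powers of a primitive $n$-th root of unity. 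These points are repairable, but as written the proposal reduces the conjecture to its hardest step rather than proving it.
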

\begin{rem}
    It appears that Hill, Hopkins, and Ravenel have investigated this conjecture further when $n=k(p-1)$ and have information on $\pi_*E_{k(p-1)}^{hC_p}$; see~\cite{hhreo}.
\end{rem}

In the case of $n=p-1$~\Cref{conj:hopkins} has been verified when $K = G$ is the maximal finite subgroup of $\G_n$, although there does not appear to be a proof in the literature.

\begin{rem}
    We once again fix $n$ to be $p-1$; we will continue to write expressions such as $2p(p-1)^2$ as $2pn^2$ for brevity. 
\end{rem}

We will sketch this isomorphism when $K = F$ is the maximal finite subgroup of $\S_n$, since the (collapsing) Lyndon-Hochschild-Serre spectral sequence gives an isomorphism
\[
H^*(F;(E_n)_*)^{\Gal} \simeq H^*(G;(E_n)_*).
\]

Recall that
\[
 F = \langle \zeta,\tau | \zeta^p=1,\tau^{n^2}=1,\tau^{-1} \zeta \tau = \zeta^e \rangle,
 \]
where $e \in (\Z/p)^\times$ is a generator, and we choose $\tau$ so that it is the image in $\S_n$ of $\eta = \omega^{(p^n-1)/n^2} \in \W(\F_{p^n})$, where $\omega$ is a primitive $(p^n-1)$-st root of unity. 

The calculation starts, as in~\cite{GHMR,skypaper}, by defining an action of $F$ on $\W=\W(\F_{p^n})$ by
\[
\zeta(b) = b \qquad \text{ and } \qquad \tau(b) = \omega^{\frac{p^n-1}{n^2}}b,
\]
for $b \in \W$. 

Let $\chi$ be the associated representation and denote by $\chi'$ its restriction to $C_{n^2}$, the subgroup generated by $\tau$. 

We define an $F$-module $\rho$ by the short exact sequence
\begin{equation}\label{eq:rhosequence}
0 \to \chi \to \W[F] \otimes_{\W[C_{n^2}]}\chi' \to \rho \to 0,
\end{equation}
 where the first map is given by $m \mapsto (1+\zeta+\zeta^2 + \cdots + \zeta^{p-1}) \otimes m$, for $m$ a generator of $\chi$. 
 \begin{rem}
     Justin Noel has pointed out to the author that there is a coordinate free description of the first map, namely as the canonical map
     \[
\chi \to \Coind_{C_{n^2}}^{F} \Res^{F}_{C_{n^2}} \chi.
     \]
 \end{rem}
The claim is that there is a morphism of $F$-modules $\rho \to E_{-2}$ such that there is a $F$-equivariant isomorphism
\begin{equation}\label{eq:en}
(E_n)_* \simeq S(\rho)[N^{-1}]^\wedge_I,
\end{equation}
where $N:=\prod_{g \in F} g_*(a) \in S(\rho)$, $a \in \rho$ is a generator and $I$ is the preimage of $\mathfrak{m} = (p,u_1,\ldots,u_{n-1})$ under the morphism $S(\rho)[N^{-1}] \to (E_n)_*$ (see~\cite[Lemma 3.2]{GHMR}). 

Again we simply sketch this result. Hopkins and Devinatz have shown that the divided power envelope of $E_0$ has canonical coordinates $w,w_i$ on which the action of $\S_n$ is easier to express. In fact, by~\cite[Proposition 3.3]{hd95} if we express $g \in \S_n$ by 
$g = \sum_{j=0}^{n-1} a_j S^j$, then 
\[
g(w) = a_0 w + \sum_{j=1}^{n-1} a^{\phi^j}_{n-j} ww_j,
\]
and
\begin{equation*}
\begin{split}
g(ww_i) = &pa_iw + pa^{\phi^{n-1}}_{i+1}ww_{n-1} + \cdots + pa^{\phi^{i+1}}_{n-1}ww_{i+1}  \\
	&+ a^{\phi^i}_0ww_i+ \cdots +a^\phi_{i-1}ww_1, 
\end{split}
\end{equation*}
where $a^\phi$ denotes the Frobenius. The action on the $w$ and $w_i$ is related to that of $u$ and $u_i$ by~\cite[Proposition 4.9]{hd95}, which gives (see also~\cite{naveannals})
\begin{equation}\label{eq:action1}
		g(u) \equiv a_0 u + a_{n-1}^\phi uu_1 + \cdots + a_1^{\phi^{n-1}} uu_{n-1} \mod (p,\mathfrak{m}^2)
\end{equation}
and
\begin{equation}\label{eq:action2}
g(uu_i) \equiv a_0^{\phi^i}uu_i + \cdots + a_{i-1}^\phi uu_1 \mod (p,\mathfrak{m}^2).
\end{equation}
This gives the following action for $\tau$, with $1 \le i \le n$:
\begin{equation}
		\tau(uu_i) \equiv \eta^{p^i} uu_i \mod (p,\mathfrak{m}^2),
\end{equation}
where $u_n=1$ and $u_0=p$. 

For an element $g$, of order $p$, Nave shows that we can write $g = \sum_{j=0}^\infty a_i S^i$ with $a_0 \equiv 1 \mod (p)$ and $a_1 \in \W(\F_{p^n})^\times$. With~\Cref{eq:action1,eq:action2} the action modulo $(p,\mathfrak{m}^2)$ of $\zeta$ on the ordered basis $u,uu_{n-1},\ldots,uu_{1}$,  is given by the matrix
\[
\begin{pmatrix}
1 & a_1^{\phi^{n-1}} & \cdots & a_{n-2}^{\phi^2} & a_{n-1}^\phi \\
0 & 1&  \cdots & a_{n-3}^{\phi^2} & a_{n-2}^\phi \\
\vdots & \vdots && \vdots & \vdots\\
0& 0 & \cdots & 1 & a_1^\phi \\
0 & 0 & \cdots & 0 & 1
\end{pmatrix}.
\]
Note that if we write this matrix as $A = I+B$, where $B$ is now a strictly upper triangular matrix, we see (note $B^n=0$) that
\[
u+\zeta_*(u) + \cdots + \zeta^{p-1}_*(u) \equiv 0 \mod (p,\mathfrak{m}^2).
\]

Together, these calculations, along with the sequence~\eqref{eq:rhosequence}, imply that $\rho \otimes_{\W} \F_{p^n} \simeq E_{-2} \otimes_{E_0} E_0/(p,\mathfrak{m}^2)$ as $F$-modules and that we can choose the residue class of $u$ as a generator. The idea now is to find a class $z \in E_{-2}$ with the same reduction as $u$ such that $\tau(z) = \eta z$ and $(1+ \zeta+\cdots+\zeta^{p-1})z=0$. We rely on the following result due to Nave, who credits it to Hopkins.
\begin{lemma}~\cite{nave,naveannals}
There are $z,z_1,\ldots,z_{n-1} \in (E_n)_*$ such that
\begin{enumerate}
	\item $z \equiv cu \mod (p,\mathfrak{m}^2)$ for $c \in \W^\times$,
	\item $z_i \equiv c_iuu_i \mod (p,u_1,\ldots,u_{i-1},\mathfrak{m}^2)$ for $c_i \in \W^\times$,
	\item $(1 +\zeta + \cdots + \zeta^{p-1})z = 0$,
	\item $\tau(z) = \eta z$,
	\item $(\zeta-1)z = z_{n-1} \text{ and } (\zeta-1)z_{i+1}=z_i$ for $1 \le i < n-1$,
	\item $\zeta(z_i) \equiv z_i \mod (p,z_1,\ldots,z_{i-1})$ for each $i$. 
\end{enumerate}
\end{lemma}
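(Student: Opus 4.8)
The plan is to reduce the whole statement to the construction of the single element $z$, and then to read off $z_1,\dots,z_{n-1}$ and the remaining properties from the structure already exhibited modulo $(p,\mathfrak m^2)$.

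First I would isolate the two essential requirements on $z$, namely the $\tau$-eigenvector condition (4), $\tau(z)=\eta z$, and the norm condition (3), $(1+\zeta+\cdots+\zeta^{p-1})z=0$; write $\mathrm{Tr}\coloneqq 1+\zeta+\cdots+\zeta^{p-1}$ for the latter operator. Granting such a $z$ with $z\equiv cu\bmod(p,\mathfrak m^2)$, I would simply \emph{define} $z_i\coloneqq(\zeta-1)^{n-i}z$ for $1\le i\le n-1$. Property (5) is then immediate from the definition. Property (2) follows from the explicit action modulo $(p,\mathfrak m^2)$: since $\zeta-1$ reduces to the strictly upper-triangular matrix $B$ whose superdiagonal entries are the units $a_1^{\phi^{j}}$, the operator $\zeta-1$ is a \emph{regular} nilpotent at leading order, so $(\zeta-1)^{n-i}$ carries the class of $u$ to a unit multiple of $uu_i$ modulo lower-order terms, which is exactly (2). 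For property (6), the cases $i\ge 2$ are automatic because $(\zeta-1)z_i=z_{i-1}\in(z_1,\dots,z_{i-1})$; the case $i=1$ asks that $(\zeta-1)^nz\equiv 0\bmod p$, and this I would extract from (3) using the group-ring identity $\mathrm{Tr}\equiv(\zeta-1)^{p-1}=(\zeta-1)^n\pmod p$ (valid since $\F_p[C_p]\cong\F_p[t]/(t^p)$ with $t=\zeta-1$), giving $(\zeta-1)^nz\equiv\mathrm{Tr}(z)=0\bmod p$.

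It remains to construct $z$. I would first produce the eigenvector condition exactly by averaging: since $|C_{n^2}|=n^2$ is prime to $p$ and hence invertible in $\W$, the idempotent $\pi_\eta\coloneqq\frac{1}{n^2}\sum_{j}\eta^{-j}\tau^{j}$ projects $(E_n)_{-2}$ onto the $\eta$-eigenspace $M_\eta$ of $\tau$, and because $\tau(u)\equiv\eta u\bmod(p,\mathfrak m^2)$ (here one uses $p^n\equiv 1\bmod n^2$) the element $\pi_\eta(u)$ reduces to $u$, giving (1) and (4). The operator $\mathrm{Tr}$ commutes with $\tau$ — conjugation by $\tau$ only permutes the powers of $\zeta$ — so $\mathrm{Tr}$ preserves $M_\eta$, and by the leading-order computation already recorded, $\mathrm{Tr}(\pi_\eta u)\equiv 0\bmod(p,\mathfrak m^2)$. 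To upgrade this to $\mathrm{Tr}(z)=0$ on the nose I would run a successive-approximation argument inside the $\mathfrak m$-adically complete module $M_\eta$: since $\zeta$ acts trivially on $\mathrm{gr}^\bullet_{\mathfrak m}(E_n)_{-2}$, the operator $\mathrm{Tr}$ strictly raises $\mathfrak m$-adic filtration, and the relation $\mathrm{Tr}^2=p\,\mathrm{Tr}$ shows that $\tfrac1p\mathrm{Tr}$ is a rational idempotent whose complement projects onto $\ker(\mathrm{Tr})$. Correcting $\pi_\eta(u)$ by the (a priori only rational) class $\tfrac1p\mathrm{Tr}(\pi_\eta u)$ one filtration layer at a time, and using completeness to pass to the limit, should yield an honest $z\in M_\eta\cap\ker(\mathrm{Tr})$ with the correct reduction.

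The main obstacle is precisely this last step: showing that the $\ker(\mathrm{Tr})$-component of the eigenvector is \emph{integral}, i.e. that the failure of $\mathrm{Tr}(\pi_\eta u)$ to be divisible by $p$ can be absorbed by an admissible correction in $(p,\mathfrak m^2)M_\eta$. Equivalently, one must control the image of $\mathrm{Tr}$ on each associated-graded piece of $M_\eta$ so that the inductive correction can always be solved; this is where the regular-nilpotent structure of $\zeta-1$ and the identity $\mathrm{Tr}\equiv(\zeta-1)^n\bmod p$ have to be fed in quantitatively, rather than just at leading order. I would expect the bookkeeping here to be the only genuinely laborious part, the base case being guaranteed by the isomorphism $\rho\otimes_\W\F_{p^n}\simeq(E_n)_{-2}\otimes_{(E_n)_0}(E_n)_0/(p,\mathfrak m^2)$, under which $\mathrm{Tr}$ visibly annihilates the eigenline spanned by the generator.
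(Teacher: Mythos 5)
First, note that the paper itself does not prove this lemma: it is quoted from~\cite{nave,naveannals} (where it is credited to Hopkins), so your attempt can only be measured against Nave's argument, not against anything in this paper. Your formal reductions are correct, and they are the natural ones. Setting $z_i \coloneqq (\zeta-1)^{n-i}z$ makes (5) a tautology; (2) follows because, by \eqref{eq:action1} and \eqref{eq:action2}, $\zeta-1$ is regular nilpotent to leading order (the coefficient of $uu_{i-1}$ in $(\zeta-1)(uu_i)$ is the unit $a_1^{\phi^{i-1}}$); (6) for $i=1$ follows from (3) via the identity $\mathrm{Tr}\equiv(\zeta-1)^{p-1}\bmod p$ in $\Z[C_p]$; and the averaging idempotent $\pi_\eta=\frac{1}{n^2}\sum_j\eta^{-j}\tau^j$ does produce an exact $\tau$-eigenvector congruent to $u$ modulo $(p,\mathfrak{m}^2)$, using that $n^2$ is prime to $p$, that $\S_n$ acts $\W$-linearly, and that $p^n\equiv 1\bmod n^2$. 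So the lemma correctly reduces to: find $z$ in the $\eta$-eigenspace with $z\equiv cu\bmod(p,\mathfrak{m}^2)$ and $\mathrm{Tr}(z)=0$ exactly.

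But this last step is the entire content of the lemma, and your proposal does not establish it; worse, the mechanism you offer for it rests on a false claim. You assert that $\zeta$ acts trivially on $\operatorname{gr}^\bullet_{\mathfrak{m}}(E_n)_{-2}$ and conclude that $\mathrm{Tr}$ strictly raises the $\mathfrak{m}$-adic filtration. Neither holds: the formulas \eqref{eq:action1} and \eqref{eq:action2} show that $\zeta$ acts on $\operatorname{gr}^1$ unipotently but nontrivially (unit off-diagonal entries), and on each graded piece $\operatorname{gr}^k$, an $\F_{p^n}$-vector space, $\mathrm{Tr}$ acts as $(\zeta-1)^{p-1}$, which is nonzero on every Jordan block of size exactly $p$. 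Such blocks do occur: already for $p=3$, $n=2$, the symmetric square of a size-$2$ block is a single size-$3$ block, so $\mathrm{Tr}$ is nonzero on $\operatorname{gr}^2$; more structurally, \Cref{lem:cpcalc} shows that the trivial summands generated by powers of $\sigma_p$ are not in the image of the transfer, i.e.\ $\hat H^0(C_p,-)\neq 0$ there, so ``invariant'' is far from ``a trace.'' Hence your successive approximation can get stuck: at each stage you must solve $\mathrm{Tr}(\epsilon)\equiv\mathrm{Tr}(z^{(k)})$ with $\epsilon$ constrained to lie in $(p,\mathfrak{m}^2)$, and the solvability of precisely this equation --- equivalently, the integrality of $\tfrac{1}{p}\mathrm{Tr}(\pi_\eta u)$ inside $(p,\mathfrak{m}^2)$ --- is the nontrivial assertion being proved. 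Nor can you appeal to the vanishing of the relevant obstruction group $\hat H^0(C_p,(E_n)_{-2})$: in this paper that computation is a consequence of the $F$-equivariant isomorphism $(E_n)_*\simeq S(\rho)[N^{-1}]^\wedge_I$, which is itself built from this lemma, so the appeal would be circular. Supplying this integrality by an explicit inductive construction is exactly what the Hopkins--Nave proof does, and it is the part you have left open.
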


The morphism $\rho \to E_{-2}$ is then defined by sending the generator of $\rho$ to $z$. This defines a morphism of $\W$-algebras $S(\rho) \to E_*$. Now $S(\rho)$ is polynomial over $\W$ on $n$ generators, and we can choose a generator, $a$, such that its image in $E_*$ is the (invertible) element $z$ from above. After inverting $N$ this is an inclusion onto a dense subring; completing then gives the isomorphism described above, as in~\cite[Proposition 2]{skypaper}.

\section{The Tate cohomology of $G$}

Let $\text{tr}$ be the transfer map, defined as follows:
\[
\xymatrix@R=1mm{
	\text{tr:}M \ar[r] & M^G \\
	x \ar@{|->}[r] & \sum_{g \in G} gx.
}
\]
In what follows let $A = S(\rho)[N^{-1}]$. The following is the analogue of~\cite[Lemma 3.3]{GHMR}.
\begin{lemma}\label{lem:cpcalc}
Let $C_p \subset \S_n$ be the subgroup generated by $\zeta$. Then there is an exact sequence 
\[
A \xrightarrow{\text{tr}} H^*(C_p,A) \to \F_{p^n}[a,b,d^{\pm 1}]/(a^2) \to 0\]
with $|a| = (1,-2),|b| = (2,0)$ and $|d| = (0,-2p)$. 
\end{lemma}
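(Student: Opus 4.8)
The plan is to identify the cokernel of the transfer with the Tate cohomology $\widehat{H}^*(C_p,A)$ and then to compute the latter. Since $C_p$ is cyclic, its cohomology is $2$-periodic in positive degrees, with $H^0(C_p,A)=A^{C_p}$, $H^{2i}(C_p,A)=A^{C_p}/\mathrm{tr}(A)$ and $H^{2i+1}(C_p,A)=\ker(\mathrm{tr})/\mathrm{im}(\zeta-1)$ for $i\geq 1$, where $\mathrm{tr}=1+\zeta+\cdots+\zeta^{p-1}$ is the norm. The image of the transfer is exactly $\mathrm{tr}(A)\subset A^{C_p}=H^0$, so the cokernel of $\mathrm{tr}\colon A\to H^*(C_p,A)$ is precisely the Tate cohomology $\widehat{H}^*(C_p,A)$, whose even part is $A^{C_p}/\mathrm{tr}(A)$ and whose odd part is $\ker(\mathrm{tr})/\mathrm{im}(\zeta-1)$. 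Thus the lemma is equivalent to the assertion $\widehat{H}^*(C_p,A)\cong \F_{p^n}[a,b,d^{\pm 1}]/(a^2)$, and the rest of the argument is the computation of this ring.

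The key structural input is the behaviour of the elements $e_i\coloneqq \zeta^i(z)$ for $0\leq i\leq p-1$. The multiplicative norm $d\coloneqq \prod_{i=0}^{p-1}e_i$ is $C_p$-invariant of internal degree $-2p$, and since $N=\prod_{g\in F}g_*(z)$ is, up to a unit of $\W$, a power of $d$, inverting $N$ inverts $d$; consequently each $e_i=d\cdot\prod_{j\neq i}e_j^{-1}$ is a unit of $A$. Moreover $\zeta$ permutes the $e_i$ cyclically, and property (3) of Nave's lemma provides the single additive relation $\sum_{i=0}^{p-1}e_i=\mathrm{tr}(z)=0$. I would then exhibit the three candidate generators: $d\in\widehat{H}^0$ (a unit, accounting for $d^{\pm 1}$); the class $a\coloneqq[z]\in\widehat{H}^1=\ker(\mathrm{tr})/\mathrm{im}(\zeta-1)$, legitimate since $\mathrm{tr}(z)=0$, of bidegree $(1,-2)$; and the periodicity class $b\in\widehat{H}^2$ of bidegree $(2,0)$ pulled back along the $C_p$-equivariant unit $\W\to A$ from $\widehat{H}^*(C_p,\W)$. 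The relation $a^2=0$ is forced by graded-commutativity, as $p$ is odd.

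To compute $\widehat{H}^*(C_p,A)$ on the nose I would first reduce modulo $p$. Writing $\overline{A}=A/p$, the reduction of $\rho$ becomes the $(p-1)$-dimensional Jordan block, and in the $e_i$-coordinates $\overline{A}=\F_{p^n}[e_0,\ldots,e_{p-1}]/(\textstyle\sum_i e_i)[d^{-1}]$, with $C_p$ cyclically permuting the $e_i$; that is, $\overline{A}$ is a localisation of the symmetric algebra on the permutation module $\F_{p^n}[C_p]$ modulo the single linear relation. I would compute $\widehat{H}^*(C_p,\overline{A})$ from the short exact sequence of $C_p$-modules $0\to S(\F_{p^n}[C_p])[d^{-1}]\xrightarrow{\cdot\sum e_i} S(\F_{p^n}[C_p])[d^{-1}]\to \overline{A}\to 0$: because $\sum_i e_i=\mathrm{tr}(e_0)$ is itself a transfer it vanishes in Tate cohomology, so the connecting map collapses and one reads off $\widehat{H}^*(C_p,\overline{A})$ from the purely combinatorial Tate cohomology of $C_p$ acting by permutations on $S(\F_{p^n}[C_p])[d^{-1}]$, which is supported on the cyclically invariant monomials, i.e. the powers of $d$.

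Finally I would pass from $\overline{A}=A/p$ back to $A$ using the Bockstein long exact sequence of $0\to A\xrightarrow{p}A\to \overline{A}\to 0$; as $\widehat{H}^*$ is annihilated by $p$, this yields short exact sequences $0\to \widehat{H}^s(C_p,A)\to \widehat{H}^s(C_p,\overline{A})\to \widehat{H}^{s+1}(C_p,A)\to 0$. The integral classes $a,b,d^{\pm 1}$ above give a map $\F_{p^n}[a,b,d^{\pm 1}]/(a^2)\to \widehat{H}^*(C_p,A)$, and comparing its image in $\widehat{H}^*(C_p,\overline{A})$ with the mod-$p$ computation pins down the answer. I expect the main obstacle to be exactly this integral bookkeeping: modulo $p$ the (generically free) $C_p$-action on $\Spec A$ acquires a fixed locus along the special fibre, so the mod-$p$ Tate cohomology is essentially \emph{twice} the integral one, and the Bockstein must be controlled precisely---through the multiplicative structure and the explicit classes---to extract the correct half. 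The supporting modular-invariant-theoretic description of $\overline{A}^{C_p}$ and of the transfer image is the other technical heart of the argument, and it is here that the concrete analysis of \cite{GHMR,skypaper} at $n=2$, $p=3$ serves as the template.
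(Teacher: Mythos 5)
Your proposal shares its skeleton with the paper's proof: both rest on (i) presenting $S(\rho)$ via the short exact sequence $0 \to S(J)\otimes\chi \xrightarrow{\cdot\sigma_1} S(J) \to S(\rho) \to 0$, where $S(J)$ is the symmetric algebra on the permutation module and $\sigma_1=\sum_i e_i$; (ii) the observation that monomial orbits in $S(J)$ are free except for powers of $\sigma_p=d$, so the relevant cohomology of $S(J)$ is concentrated on powers of $d$; and (iii) the fact that multiplication by $\sigma_1$ acts as zero because $\sigma_1$ is itself a transfer. Your identifications of the generators also agree with the paper's ($d$ the multiplicative norm, $a=[z]$ legitimate since $\mathrm{tr}(z)=0$ and in fact equal to the paper's boundary class, $b$ pulled back along the unit, and $N$ equal to $d^{n^2}$ up to a unit of $\W$, so inverting $N$ inverts $d$). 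The reduction of the lemma to a Tate-cohomology statement is also fine, up to the small imprecision that the cokernel of the transfer is the \emph{nonnegative-degree} part of $\widehat{H}^*$ (so $b$, unlike $d$, is not yet inverted).

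The one genuine divergence is your decision to reduce mod $p$ first, and it manufactures the only real gap: the "extract the correct half" Bockstein step, which you flag as the main obstacle but do not carry out. The paper never faces this issue, because run integrally the same argument closes itself: $\widehat{H}^*(C_p,\W)\cong \F_{p^n}[\hat b^{\pm 1}]$ is concentrated in \emph{even} degrees ($\W$ is $p$-torsion free, so there is no exterior class $\hat a$), hence so is the cohomology of $S(J)[d^{-1}]$, and the long exact sequence of (i) degenerates into short exact sequences identifying the even part of $\widehat{H}^*(C_p,A)$ with $\F_{p^n}[\hat b,d^{\pm1}]$ and the odd part with its shift under the boundary map --- which is exactly where $a$ comes from. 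There is no extension problem and nothing to halve. If you keep the mod-$p$ route, your comparison step must be made into an actual argument: reduction $\widehat{H}^*(C_p,A)\to\widehat{H}^*(C_p,A/p)$ is injective since everything is $p$-torsion; $1$ and $[z]$ are nonzero mod $p$ while $b,d$ act invertibly, so the candidate ring injects into $\widehat{H}^*(C_p,A)$; and then the dimension identities $\dim\widehat{H}^s(A)_t+\dim\widehat{H}^{s+1}(A)_t=\dim\widehat{H}^s(A/p)_t$ from the Bockstein sequences, combined with that lower bound in every bidegree, force equality (a sum of nonnegative deficits vanishing termwise). Without that count, "pins down the answer" is an assertion, not a proof; with it your argument works, but the simpler repair is to drop the mod-$p$ step, at which point your proof \emph{is} the paper's.
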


\begin{proof}

Let $J$ be the $F$-module $\W[F]\otimes_{\W[C_{n^2}]}\chi'$. Choose $\W(\F_{p^n})$-generators of $J$ named $x_1,\ldots,x_p$ so that $\zeta(x_i) = x_{i+1}$ for $1 \le i \le p-1$ and $\zeta(x_p)=x_1$. Furthermore, we choose $x_1$ so that it maps to the generator $a$ of $\rho$. Specifically
\[
S(J) = \W(\F_{p^n})[x_1,x_2,\ldots,x_p].
\]

The exact sequence of~\ref{eq:rhosequence}, and the definition of the $x_i$ given above, imply that the kernel of the map $J \to \rho$ is the principal ideal generated by $\sigma_1 = x_1 +x_2 + \cdots + x_p$, and that there is a short exact sequence of $F$-modules
\begin{equation}\label{eq:SES}
0 \to S(J) \otimes \chi \to S(J) \to S(\rho) \to 0,
\end{equation}
where we set the degree of $\chi$ to be -2, so that this is an exact sequence of graded modules. Explicitly, if we identify $S(\rho)$ with $\W(\F_{p^n})[a,s_*(a),\ldots,s^{n-1}_*(a)]$, then the map $S(J) \to S(\rho)$ is defined by 
\begin{equation}\label{eq:symmap}
\begin{split}
	x_1 & \mapsto a\\
	x_i & \mapsto s_*^{i-1}(a) \text{ for } 1 \le i \le n\\
	x_p & \mapsto -(a+s_*(a)+\cdots+s_*^{n-1}(a)).
\end{split}
\end{equation}
The orbit of a monomial in $S(J)$ under the $C_p$-action has $p$ elements, unless it is a power of $\sigma_n \coloneqq x_1x_2\ldots x_p$. Therefore, as a $C_p$-module, $S(J)$ splits into a direct sum of free modules and trivial modules generated by powers of $\sigma_p$. Let $\hat{b} \in H^2(C_p,\Z_p) \subset H^2(C_p,\W(\F_{p^n})) \simeq \F_{p^n}$ be a generator, where $\W(\F_{p^n}) \subset S(J)$ is the submodule generated by the unit of the algebra. Then the description of $S(J)$ and the standard resolution of $C_p$ gives a short exact sequence
	\[
	S(J) \xrightarrow{\text{tr}} H^*(C_p,S(J)) \to \F_{p^n}[\hat b,\sigma_p] \to 0
	\]
with $| \hat b|=  (2,0)$ and $|\sigma_n| = (0,-2p)$. Similar considerations show that the cohomology of $H^*(C_p,S(J) \otimes \chi)$ is the same, however the internal degree is shifted by -2. 

We then use the long exact sequence associated to~\eqref{eq:SES} and the fact that $H^\ast(C_p,S(J))$ and $H^\ast(C_p,S(J) \otimes \chi)$ are concentrated in even degrees to get short exact sequences
\[
0 \to H^{2k-1}(C_p,S(\rho)) \to H^{2k}(C_p,S(J) \otimes \chi) \to H^{2k}(C_p,S(J)) \to H^{2k}(C_p,S(\rho)) \to 0.
\]
The middle map is zero when $k>0$, as it is induced by multiplication by $\sigma_1$, which is in the image of the transfer.  Let $d \in H^0(C_p,S(\rho)_{-2p})$ be the image of $\sigma_p$, let $b \in H^2(C_p,S(\rho)_0)$ be the image of $\hat b$ and let $a \in H^1(C_p,S(\rho)_{-2})$ map to $\tilde b \in H^2(C_p,S_0(J) \otimes \chi)$ under the boundary map. Then the above short exact sequence gives the following exact sequence in group cohomology
	\[
	S(\rho) \xrightarrow{\text{tr}} H^*(C_p,S(\rho)) \to \F_{p^n}[a,b,d]/(a^2) \to 0,
	\]
where graded commutativity forces the element $a$ to satisfy $a^2 = 0$.

We wish to identify the image of $N \in S(\rho)$ under the map $S(J) \to S(\rho)$. Under the identifications made above we see that 
\[
N = (as_*(a)s^2_*(a)\cdots s^{n-1}_*(a))(-a-s_*(a)-\cdots-s^{n-1}_*(a))^{n^2},
\]
and, by the formulas of~\ref{eq:symmap} this is the image of $d^{n^2}$. We conclude that the effect of inverting $N$ in the cohomology of $C_p$ is inversion of the element $d$. 
\end{proof}

\begin{lemma}
Let $C_p \subset \S_n$ be the subgroup generated by $\zeta$. Then there is an isomorphism 
\[
\hat H^*(C_p,A) \simeq \F_{p^n}[a,b^{\pm 1},d^{\pm 1}]/(a^2).
\] 
\end{lemma}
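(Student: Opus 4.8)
The plan is to read off the Tate cohomology directly from the ordinary group cohomology computed in \Cref{lem:cpcalc}, exploiting that $C_p$ is cyclic. I would use three standard facts about the Tate cohomology of a cyclic group: the comparison map $H^*(C_p,M)\to \hat H^*(C_p,M)$ is an isomorphism in cohomological degrees $\ge 1$; in degree $0$ it identifies $\hat H^0(C_p,M)$ with the cokernel $M^{C_p}/\mathrm{tr}(M)$ of the transfer; and $\hat H^*(C_p,M)$ is $2$-periodic via cup product with the periodicity class $b\in H^2(C_p,\Z_p)$, which thereby becomes invertible. Equivalently, and more compactly, $\hat H^*(C_p,M)$ is the localisation $H^*(C_p,M)[b^{-1}]$ of ordinary cohomology at $b$; I would either cite this as standard or verify it degree by degree from the three facts above.

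Granting this, I would feed in the exact sequence of \Cref{lem:cpcalc},
\[
A \xrightarrow{\mathrm{tr}} H^*(C_p,A) \to \F_{p^n}[a,b,d^{\pm 1}]/(a^2) \to 0.
\]
The image of $\mathrm{tr}$ sits entirely in cohomological degree $0$, so in degrees $\ge 1$ the comparison map already gives $\hat H^{\ge 1}(C_p,A)=\F_{p^n}[a,b,d^{\pm1}]/(a^2)$ in those degrees, while in degree $0$ the cokernel description yields $\hat H^0(C_p,A)=A^{C_p}/\mathrm{tr}(A)=\F_{p^n}[d^{\pm1}]$, the $a^0b^0$ part. Two-periodicity then fills in the negative degrees, and assembling the pieces across all degrees is exactly the assertion that $b$ has been inverted, giving $\hat H^*(C_p,A)\simeq \F_{p^n}[a,b^{\pm1},d^{\pm1}]/(a^2)$. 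Packaged through localisation, this is just the observation that inverting $b$ annihilates the degree-$0$ image of the transfer and leaves $(\F_{p^n}[a,b,d^{\pm1}]/(a^2))[b^{-1}]$.

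The one point needing genuine care — which I regard as the crux — is that the image of the transfer disappears in Tate cohomology. This is precisely where the passage from $H^*$ to $\hat H^*$ does its work: the cokernel-of-transfer description of $\hat H^0$ handles degree $0$ directly, and in higher degrees the projection formula $b\cdot\mathrm{tr}(x)=\mathrm{tr}(\mathrm{res}(b)\cdot x)=0$, with $\mathrm{res}(b)\in H^2(\{e\},A)=0$, shows that $b$ annihilates the transfer image so that it is $b$-torsion and cannot survive after inverting $b$. Everything else is degree bookkeeping, matching $a$ in cohomological degree $1$, $b$ in degree $2$, and $d$ in degree $0$.
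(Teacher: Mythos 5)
Your proof is correct and takes essentially the same route as the paper's: both obtain $\hat H^*(C_p,A)$ from the ordinary cohomology computed in \Cref{lem:cpcalc} by inverting the class $b$, using the comparison isomorphism in positive degrees, the cokernel-of-transfer description of $\hat H^0$, and the $2$-periodicity of the Tate cohomology of a cyclic group. Your projection-formula argument that $b\cdot\mathrm{tr}(x)=\mathrm{tr}(\mathrm{res}(b)\cdot x)=0$ kills the transfer image is a slightly cleaner justification of the one step the paper handles by noting the transfer image lives in dimension $0$ with ``no corresponding group in positive dimensions,'' but it is the same step in the same argument.
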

\begin{proof}
The map from cohomology to Tate cohomology is an isomorphism in positive dimensions and preserves the cup product structure~\cite[Exercise VI.6.1]{brown1982cohomology}. Note that the image of the transfer map is in dimension 0, and hence must be killed by multiplication by $b$ since there is no corresponding group in positive dimensions. It is well known that the Tate cohomology of a finite cyclic group is 2-periodic, with the periodicity given by the cup product.  These observations imply that we can obtain the Tate cohomology by simply inverting the degree 2 class $b$ in the ordinary group cohomology.  By definition the elements that are in the image of the transfer map are zero in Tate cohomology, and so we have:
	\[
	\hat H^*(C_p,A) \simeq \tilde H^*(C_p,A)[b^{-1}] \simeq \F_{p^n}[a,b^{\pm 1},d^{\pm 1}]/(a^2).\qedhere
	\]
\end{proof}

In order to go from $C_p$ to $F$ we need to work out the quotient actions of $\tau$ on $a,b$ and $d$. 
\begin{lemma}
The action of $\tau$ on $H^*(C_p;S(\rho)[N^{-1}])$ and $\hat H^*(C_p;S(\rho)[N^{-1}])$  is given by
\[
\tau(a) =e \eta a \quad \tau(b) =eb \quad \tau(d) =  \eta^p d,
\]
where $e \in (\Z/p)^\times$ is the element such that $\tau^{-1} \zeta \tau = \zeta^e$.
\end{lemma}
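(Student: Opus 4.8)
The plan is to treat the three classes separately, since each sits in a different coefficient module and cohomological degree, and in every case the action of $\tau$ is the composite of its action on the coefficient module with the conjugation (i.e.\ quotient $C_{n^2}$) action on $H^*(C_p;-)$ coming from the extension $1 \to C_p \to F \to C_{n^2} \to 1$. The one fact I would isolate at the outset is that $\tau$, being the image of $\eta \in \W^\times \subset \S_n$, acts by conjugation by an element of the commutative ring $\W$ and hence fixes the constants $\W \subset S(\rho)$ pointwise; thus the degree-$0$ coefficients carry the trivial $\tau$-action, whereas the auxiliary module $\chi$ carries the twisted action $\tau(-)=\eta\cdot(-)$ by construction. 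Keeping these two straight is what separates the three answers, and in particular explains why a factor of $\eta$ appears for $a$ but not for $b$.

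For $d$ I would work in $S(J)$ before passing to $S(\rho)$. Writing $x_i=\zeta^{i-1}\otimes v$ for a generator $v$ of $\chi'$ and using $\tau\zeta=\zeta^{e^{-1}}\tau$ (which follows from $\tau^{-1}\zeta\tau=\zeta^e$) together with $\tau\cdot v=\eta v$, one finds $\tau\cdot x_i=\eta\, x_{\pi(i)}$, where $\pi(i)\equiv(i-1)e^{-1}+1 \pmod p$ is a permutation of $\{1,\dots,p\}$. Since $\tau$ is an algebra automorphism and $\eta$ is central, $\tau(\sigma_p)=\prod_i\tau(x_i)=\eta^p\,\sigma_p$; because $d\in H^0(C_p;S(\rho)_{-2p})=(S(\rho)_{-2p})^{C_p}$ is the image of $\sigma_p$ and $\tau$ acts on $H^0$ purely through the coefficients, this gives $\tau(d)=\eta^p d$. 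No Frobenius twist intervenes precisely because $\tau$ fixes $\W$.

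For $b$ and $a$ I would use the conjugation action on $C_p$. From $\tau^{-1}\zeta\tau=\zeta^e$ and the cochain-level formula $(\tau\cdot f)(x_1,\dots,x_k)=\tau\cdot f(\tau^{-1}x_1\tau,\dots,\tau^{-1}x_k\tau)$, the induced automorphism of $C_p$ is $\zeta\mapsto\zeta^e$, which acts on $H^1(C_p;\F_p)=\Hom(C_p,\F_p)$, and hence via the Bockstein on $H^2(C_p;\Z_p)\cong\Z/p$, by multiplication by $e$. Since $\hat b$, and therefore $b\in H^2(C_p;S(\rho)_0)$, has trivial (constant) coefficients, this yields $\tau(b)=eb$. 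For $a\in H^1(C_p;S(\rho)_{-2})$ I would exploit that it was defined through the connecting homomorphism $\delta$ of the short exact sequence~\eqref{eq:SES}, under which $a\mapsto\hat b\otimes v\in H^2(C_p;\chi)$ (with $\chi$ placed in degree $-2$), and which is injective on this class by the exact sequence established in~\Cref{lem:cpcalc}. On $H^2(C_p;\chi)$ the coefficient action of $\tau$ is multiplication by $\eta$ while the conjugation action is multiplication by $e$, so $\tau$ acts by $e\eta$; $\tau$-equivariance of $\delta$ then forces $\tau(a)=e\eta a$.

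Finally, the Tate statement follows because the natural map $H^*(C_p;A)\to\hat H^*(C_p;A)$ is a $\tau$-equivariant homomorphism of graded rings which is an isomorphism in positive degrees, and $d$ is already invertible after inverting $N$; the formulas therefore transport verbatim. The step I expect to be the main obstacle is pinning down the conjugation action with the correct normalization — verifying that $\tau$ (and not $\tau^{-1}$) induces $\zeta\mapsto\zeta^e$, hence multiplication by $e$ rather than $e^{-1}$ on $H^2(C_p;\Z_p)$ — and checking that $\delta$ is genuinely $\tau$-equivariant and injective on the class defining $a$, so that the module and group contributions combine to exactly $e\eta$.
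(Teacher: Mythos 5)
Your proof is correct, and its overall architecture matches the paper's: $d$ is handled in $S(J)$, where $\tau$ multiplies each $x_i$ by $\eta$ (up to the permutation you record), so $\tau(\sigma_p)=\eta^p\sigma_p$ and hence $\tau(d)=\eta^pd$; and $a$ is handled exactly as in the paper, via $\tau$-equivariance of the connecting map of the sequence \eqref{eq:SES}, which is injective on $a$ by the exact sequence of \Cref{lem:cpcalc}, so the conjugation factor $e$ picks up the extra twist $\eta$ from the coefficient module $\chi$. Where you genuinely diverge is $\tau(b)=eb$. The paper, following Nave, computes the conjugation action at the chain level: it builds a chain map $\phi_\bullet$ over $\alpha(\zeta)=\tau^{-1}\zeta\tau$ on the standard periodic resolution, with $\phi_1=\phi_2=N(e)\alpha$ where $N(e)=1+\zeta+\cdots+\zeta^{e-1}$, applies $\Hom_{\Z[C_p]}(-,M)$, and reads off that $\tau$ acts by $\tau N(e)$, i.e.\ by $e$ on the unit component. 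You instead note that $\hat b$ comes from $H^2(C_p;\Z_p)$ with trivial coefficients, identify that group with $\Hom(C_p,\F_p)$ via the Bockstein, and use naturality: the automorphism $\zeta\mapsto\zeta^e$ multiplies $\Hom(C_p,\F_p)$, hence $H^2(C_p;\Z_p)$, by $e$. This is cleaner and avoids the resolution entirely; what the paper's computation buys in exchange is an explicit cochain formula for the $\tau$-action in every positive degree and for an arbitrary coefficient module, not just on classes pulled back from trivial coefficients---though for this lemma that extra generality is not needed, since $a$ is reached through the boundary map rather than directly. Your normalization concern is settled by the same convention the paper uses: the action is $(\alpha,f)^*$ with $\alpha(\zeta)=\tau^{-1}\zeta\tau=\zeta^e$ and $f(m)=\tau m$, the unique compatible pair (one checks $f(\alpha(h)m)=hf(m)$), which is exactly what you adopted. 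One small imprecision: your justification that $\tau$ fixes the constants (``conjugation by an element of the commutative ring $\W$'') is a heuristic about $\mathbb{D}_n$; the honest reason in the algebraic model is simply that $\rho$, hence $S(\rho)$, is a $\W[F]$-module, so $F$ acts $\W$-linearly and fixes $S(\rho)_0=\W\cdot 1$. Finally, your closing paragraph transporting the formulas to $\hat H^*$ along the ring map $H^*\to\hat H^*$ makes explicit a step the paper leaves implicit, which is a welcome addition.
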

\begin{proof}
The action on $d$ is easily found as $\tau$ acts by multiplication by $\omega^{\frac{p^n-1}{n^2}}$ on each $x_i$ of $S(J)$.

The action on $b$ is more complicated. This can be found as Proposition 6.2 of Nave's thesis~\cite{nave} (see also~\cite[Example 6.7.10]{weibel}). Let $M$ be an $F$-module. First note that by naturality it suffices to find the action for the cohomology of $H^*(C_p;S(J))$. As explained in~\cite[pp. 80]{brown1982cohomology}, the action is induced by conjugation. 
Let $\alpha$ be the conjugation map $\alpha(\zeta) = \tau^{-1} \zeta \tau$ and write $M(\alpha)$ for $M$ with action induced by $\alpha$. Define a map $f:M(\alpha) \to M$ by sending $x$ to $\tau \cdot x$. Then the action is given as the composite 
\begin{equation}\label{eq:conjugation}
(\alpha,f)^*: H^*(C_p,M) \xrightarrow{\alpha^*} H^*(C_p,M(\alpha)) \xrightarrow{f_*} H^*(C_p,M)
\end{equation}
By taking two copies of the standard resolution, we can get an induced chain map between them
\[
\xymatrix{
\cdots \ar[r] & \Z[C_p] \ar@{-->}[d]^{\phi_2}\ar[r]^{N(p)} & \Z[C_p] \ar@{-->}[d]^{\phi_1}\ar[r]^{\zeta-1} & \Z[C_p] \ar[r]^{\epsilon}\ar@{-->}[d]^{\phi_0} & \Z \ar@{=}[d]\ar[r] & 0  \\
\cdots \ar[r] & \Z[C_p] \ar[r]_{N(p)} & \Z[C_p] \ar[r]_{\zeta-1} & \Z[C_p] \ar[r]_{\epsilon} & \Z \ar[r] & 0 
}
\]
with the chain map satisfying $\phi_*(gx) = \alpha(g)\phi_*(x)$. Here
$N(k) = 1+ \zeta + \cdots + \zeta^{k-1}$. Nave calculates that the first few maps are 	$\phi_0 = \alpha$  and $\phi_1(g) = \phi_2(g) = N(e)\alpha(g)$.

To compute $(\alpha,f)^\ast$ (see~\Cref{eq:conjugation}) we first apply $\Hom_{\Z[C_p]}(-;M)$ to the top row and $\Hom_{\Z[C_p]}(-;M(\alpha))$ to the bottom row, and then follow with $\Hom_{\Z[C_p]}(-;f)$  to get the following (cf.~\cite[pp. 22]{nave}):
\[
\xymatrix{
M \ar[r]^{\zeta-1}  \ar[d]^{\tau} & M \ar[r]^{N(p)}\ar[d]^{\tau N(e)} & M \ar[d]^{\tau N(e)} \ar[r] &\cdots \\
M \ar[r]_{\zeta-1} & M \ar[r]_{N(p)} & M \ar[r]& \cdots \\
}
\]
Taking $M = S(J) = \W[x_1,\ldots,x_p]$ and tracing what happens to $b$ we find that $\tau(b) = eb$.   

The action of $a$ then follows when we recall it is the preimage of $b$ with respect to the isomorphism
\[
H^1(C_p,S(\rho)) \to H^2(C_p;S_0(J) \otimes \chi)
\]
and so the action is twisted by the representation $\chi$. 
\end{proof}
 Before we can calculate the invariants we need to obtain a relationship between $\eta^{p-1}$ (recall that $\eta$ is the image of $\tau$) and $e$. 
\begin{lemma}\cite[Page 24]{nave}
	\[
	\eta^{p-1} \equiv e \mod (p)
	\]
\end{lemma}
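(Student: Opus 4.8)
The plan is to carry out the computation inside the totally ramified local field $L = \Q_p(X)$, where $X = \omega^{(p-1)/2}S$ satisfies $X^n = -p$, and to exploit the isomorphism $L \cong \Q_p(\zeta)$ recorded above in Section 2. Since $[L:\Q_p] = n = p-1$ and $X$ has valuation $1/n$, the field $L$ is totally ramified over $\Q_p$ with residue field $\F_p$, and both $X$ and $\pi := \zeta - 1$ are uniformizers. The first step is to observe that conjugation by $\eta$ (whose image in $\S_n$ is $\tau$) preserves $L$: the relation $\eta X \eta^{-1} = \eta^{-n} X$ exhibits this provided $\eta^{-n} \in L$, and indeed $\eta^{-n}$ is an $n$-th root of unity, hence --- because $n = p-1$ --- a Teichm\"uller element of $\Z_p^\times \subset \Q_p$. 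Thus conjugation by $\eta$ restricts to an automorphism $\gamma \in \Gal(L/\Q_p) \cong (\Z/p)^\times$, and I may write $\gamma(\zeta) = \zeta^c$ for a well-defined $c \in (\Z/p)^\times$.

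The key mechanism is to read off $c$ from the induced action of $\gamma$ on the one-dimensional $\F_p$-vector space $\mathfrak{m}/\mathfrak{m}^2$, where $\mathfrak{m}$ is the maximal ideal of the ring of integers of $L$. This action is multiplication by a scalar, and the resulting homomorphism $\Gal(L/\Q_p) \to \F_p^\times$ is precisely the mod-$p$ cyclotomic character: using $\pi = \zeta-1$ as coordinate, $\gamma(\pi) = \zeta^c - 1 \equiv c(\zeta-1) = c\,\pi \pmod{\mathfrak{m}^2}$, so the scalar is $c \bmod p$. On the other hand the scalar is intrinsic, independent of the chosen uniformizer, so I may also compute it using $X$: from $\gamma(X) = \eta^{-n}X$ the scalar equals the reduction of $\eta^{-n}$ modulo $\mathfrak{m}$, which since $\eta^{-n}\in\Z_p^\times$ is simply $\eta^{-n} \bmod p$. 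Comparing the two computations gives $c \equiv \eta^{-n} \pmod p$.

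It then remains to match conventions. By definition $e$ satisfies $\tau^{-1}\zeta\tau = \zeta^e$, i.e.\ $\gamma^{-1}(\zeta) = \zeta^e$, whence $e \equiv c^{-1} \equiv \eta^{n} = \eta^{p-1} \pmod p$, as claimed. The main obstacle is the middle step: one must justify that the scalar by which $\gamma$ acts on $\mathfrak{m}/\mathfrak{m}^2$ is well defined independently of the uniformizer and simultaneously equals the cyclotomic value $c \bmod p$ (via $\pi$) and the reduction $\eta^{-n}\bmod p$ (via $X$). This rests on the two facts that $X$ genuinely generates $\mathfrak{m}$ and that $\eta^{-n}$ reduces into the residue field $\F_p$ --- the latter being exactly where the hypothesis $n = p-1$ enters. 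With care taken over the inverse in the final line, the congruence follows; an entirely equivalent, if more hands-on, route is to compare the leading coefficients of $X$ and $\zeta-1$ directly, as in Nave's original argument.
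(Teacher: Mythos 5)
Your proof is correct, and it takes a genuinely different route from the paper's. The paper argues directly in the noncommutative order $\mathcal{O}_n$: it invokes Nave's description of elements of order $p$, namely $\zeta \equiv 1 + a_1 S \pmod{S^2}$ with $a_1 \in \W^\times$, and compares coefficients of $S$ on the two sides of $\zeta^e = \tau^{-1}\zeta\tau$ in $\mathcal{O}_n/(S^2)$, using the commutation rule $Sa = a^\phi S$ (so that $\tau^{-1}a_1 S \tau = a_1\tau^{p-1}S$); cancelling the unit $a_1$ gives $e \equiv \eta^{p-1} \pmod{p}$. You instead work entirely inside the commutative subfield $L = \Q_p(X) \cong \Q_p(\zeta)$, note (as the paper records in Section 2) that conjugation by $\eta$ restricts to an automorphism $\gamma \in \Gal(L/\Q_p)$ because $\eta^{-n} \in \Z_p^\times$, and compute the eigenvalue of $\gamma$ on the one-dimensional $\F_p$-vector space $\mathfrak{m}/\mathfrak{m}^2$ in two ways: via the uniformizer $\zeta - 1$ it is $c$ where $\gamma(\zeta)=\zeta^c$, and via the uniformizer $X$ it is $\eta^{-n} \bmod p$; matching conventions ($\gamma^{-1}(\zeta)=\zeta^e$) then yields $e \equiv \eta^{n} = \eta^{p-1} \pmod p$. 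The two points you flag do hold and are easy: $X$ is a uniformizer since $X^n = -p$ and $L/\Q_p$ is totally ramified of degree $n$, and the eigenvalue is independent of the chosen uniformizer because the induced map on $\mathfrak{m}/\mathfrak{m}^2$ is honestly $\F_p$-linear --- any automorphism of $L$ induces the identity on the prime residue field $\F_p$, which is exactly where $n = p-1$ enters. What your route buys: it avoids Nave's structural input that the coefficient $a_1$ of an order-$p$ element is a unit, replacing it with facts the paper has already established ($X\eta X^{-1} = \eta^p$ and $\Q_p(X)\cong\Q_p(\zeta)$), and it exhibits the congruence conceptually as the agreement of the mod-$p$ cyclotomic character with the tame character on $\mathfrak{m}/\mathfrak{m}^2$. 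What the paper's route buys: it is a two-line coefficient comparison, and Nave's normal form for order-$p$ elements is needed elsewhere in Section 4 in any case.
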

\begin{proof}
	Since $\zeta$ has order $p$, we have $\zeta = 1+a_1S \mod (S^2)$ where $a_1$ is a unit in $\W^\times$ (recall our description of elements of order $p$ from~\Cref{sec:eng}). Then we have (working modulo $S^2$)
	\[
	\zeta^e  = (1+a_1eS) = \tau^{-1} \zeta \tau = 1 + \tau^{-1} a_1S \tau = 1+\tau^{p-1}a_1S.
	\]
	It follows that $a_1 \tau^{p-1} \equiv a_1 e \mod (p)$ and the result follows. 
\end{proof}

\begin{prop}\label{prop:cohomcalcs}
	There is an isomorphism
	\[
	\hat H^*(G;(E_n)_*) \simeq \F_p[\alpha,\beta^{\pm 1},\Delta^{\pm 1}]/(\alpha^2)
	\]
	and an exact sequence
	\[
	E_* \xr{tr} H^*(G;(E_n)_*) \to \F_p[\alpha,\beta,\Delta^{\pm 1}]/(\alpha^2) \to 0.
	\]
	
\end{prop}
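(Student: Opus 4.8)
The plan is to descend the computation from the subgroup $C_p=\langle\zeta\rangle$, whose cohomology and Tate cohomology were determined in \Cref{lem:cpcalc} and the two lemmas following it, first to $F$ and then to $G$, exploiting at each stage that the relevant quotient has order prime to $p$. Since $|C_{n^2}|=n^2$ and $|\Gal|=n$ are both coprime to $p$, the Lyndon--Hochschild--Serre spectral sequences for $C_p\triangleleft F$ and $F\triangleleft G$ collapse onto their bottom rows, giving
\[
\hat H^*(G;A)\cong\big(\hat H^*(C_p;A)^{C_{n^2}}\big)^{\Gal},
\]
and similarly for ordinary cohomology. Moreover, taking invariants under a group whose order is invertible in the coefficients is exact, so the transfer exact sequence of \Cref{lem:cpcalc} descends as well: applying $(-)^{C_{n^2}}$ and then $(-)^{\Gal}$, and using that the averaging operators identify $\operatorname{im}(\operatorname{tr})^{C_{n^2},\Gal}$ with $\operatorname{im}(\operatorname{tr}_G)$ up to invertible scalars, reduces the exact-sequence statement to computing invariants of the quotient algebra $\F_{p^n}[a,b,d^{\pm1}]/(a^2)$.

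I would then carry out the invariant calculation over $\F_{p^n}$, noting that $\tau\in\S_n$ fixes the coefficient field. Writing a class as $a^\epsilon b^i d^j$ with $\epsilon\in\{0,1\}$, the formulas $\tau(a)=e\eta a$, $\tau(b)=eb$, $\tau(d)=\eta^p d$ together with $e\equiv\eta^{p-1}\pmod p$ (so that $e=\eta^n$ and $\eta$ has order exactly $n^2$ in $\F_{p^n}^\times$) show that $\tau$ scales the class by $\eta^{(n+1)\epsilon+ni+(n+1)j}$. Hence it is $\tau$-invariant precisely when
\[
(n+1)\epsilon+ni+(n+1)j\equiv 0 \pmod{n^2}.
\]
Solving this single congruence determines the invariant subalgebra: for the Tate cohomology, where $b$ is invertible, it is free over $\F_{p^n}$ on one exterior generator and two invertible generators; for the ordinary cohomology, where the $b$-exponent is $\geq 0$, the cokernel of the transfer is the corresponding subalgebra.

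The step I expect to be the main obstacle is the correct \emph{choice} of these three generators, since the ordinary-cohomology statement is sensitive to it in a way the Tate statement is not. The naive invariants $b^n$ and $b^{-1}d^n$ do generate the Tate answer, but $b^{-1}d^n$ does not lie in ordinary cohomology, and replacing it by $d^{n^2}$ is too coarse: monomials such as $b^{n-1}d^n$ are $\tau$-invariant and survive to the cokernel of the transfer, yet are not generated by $b^n$ and $d^{n^2}$. One is therefore led to take the degree-$2$ class $\beta=b\,d^{n(n-1)}$, the invertible class $\Delta=d^{n^2}$ (which is precisely the image of $N$, cf.\ \Cref{lem:cpcalc}), and $\alpha=a\,d^{-1}$; the real content is to verify that these three generate \emph{every} solution of the congruence with no relation beyond $\alpha^2=0$, so that the invariants of $\F_{p^n}[a,b,d^{\pm1}]/(a^2)$ are exactly $\F_{p^n}[\alpha,\beta,\Delta^{\pm1}]/(\alpha^2)$, with localization at $\beta$ giving the Tate algebra.

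It remains to take $\Gal$-invariants and to descend to $\F_p$. The assignment of a monomial to its bidegree is injective here --- the cohomological degree recovers $\epsilon$ and the $\beta$-exponent, after which the internal degree recovers the $\Delta$-exponent --- so each bidegree is at most one-dimensional over $\F_{p^n}$ and $\Gal$ acts semilinearly without permuting monomials. By Hilbert 90 / Galois descent the generators may then be normalized to be Galois-fixed, and the fixed ring is the $\F_p$-form $\F_p[\alpha,\beta^{\pm1},\Delta^{\pm1}]/(\alpha^2)$, with the transfer exact sequence following identically. Finally, the passage from $A=S(\rho)[N^{-1}]$ to its completion $(E_n)_*\cong A^\wedge_I$ changes nothing, since the computed cohomology is a finite-dimensional $\F_p$-vector space in each bidegree and the cohomology of the finite group $G$ commutes with the $I$-adic limit here; this yields the stated description of $\hat H^*(G;(E_n)_*)$ and its transfer exact sequence.
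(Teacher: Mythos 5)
Your proposal is correct and follows essentially the same route as the paper's proof: descend from $C_p$ to $F$ to $G$ via the collapsing Lyndon--Hochschild--Serre spectral sequences (both quotients having order prime to $p$), compute the $\tau$-invariants of $\F_{p^n}[a,b^{\pm 1},d^{\pm 1}]/(a^2)$ using the action formulas together with $e\equiv\eta^{p-1}\pmod p$, descend coefficients to $\F_p$ using that each bidegree is at most one-dimensional, and handle the $I$-adic completion at the end. Your explicit congruence $(n+1)\epsilon+ni+(n+1)j\equiv 0\pmod{n^2}$ and generators $\beta=b\,d^{n(n-1)}$, $\Delta=d^{n^2}$ are equivalent (up to the relabeling $\Delta\mapsto\Delta^{-1}$, $\beta\mapsto\beta\Delta^{-1}$) to the paper's $\alpha=a/d$, $\beta=b/d^{n}$, $\Delta=d^{-n^2}$, the only presentational difference being that the paper cites Symonds' Lemma 4.1 for the generation statement where you solve the congruence directly.
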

\begin{proof}
We first need to pass from $\hat H^*(C_p;A)$ to $\hat H^*(F;(E_n)_*)$. We do this by using the (collapsing) Lyndon--Hochschild--Serre spectral sequence.  First note that $\hat H^*(F;A) \simeq \hat H^*(C_p,A)^{\langle \tau \rangle}$ since the composite of the restriction and transfer maps to the trivial group is multiplication by $n^2$, which is a unit in $\W(\F_{p^n})$. 

Now $\tau(d^k) = \eta^{pk}d^k$, which is equal to $d^k$ if and only if $n^2$ divides $k$.
Define elements
\begin{align*}
	\alpha &\coloneqq  \frac {a}{d} \in \hat H^1(C_3,(S(\rho)[N^{-1}])_{2p-2}) \\
	\beta &\coloneqq  \frac{b}{d^n} \in \hat H^2(C_3,(S(\rho)[N^{-1}])_{2pn})\\
	\Delta &\coloneqq \frac{1}{d^{n^2}} \in \hat H^0(C_3,(S(\rho)[N^{-1}])_{2pn^2}).
\end{align*}
It is easy to check that these are invariant under the action of $\tau$ and an argument using, for example,~\cite[Lemma 4.1]{MR2066512}, shows that $\beta,\Delta$, and their inverses, along with $\alpha$, generate the invariants. Thus we see that $\hat{H}^*(F,A) \simeq \F_{p^n}[\alpha,\beta^{\pm 1},\Delta^{\pm 1}]/(\alpha^2)$. A completion argument, as given in~\cite[Theorem 6]{skypaper} implies that $\hat H^*(F,(E_n)_*) \simeq \hat H^*(F,S(\rho)[N^{-1}]^\wedge_I)) \simeq \hat H^*(F,S(\rho)[N^{-1}]$.

Finally we need to pass from $\hat H^*(F,(E_n)_*)$ to $H^*(G,(E_n)_*)$. We claim that the effect of the Galois action is only to reduce coefficients to $\F_p$. Indeed, as explained in~\cite[pp. 490]{MR2066512}, in bidegree $(s,t)$ the cohomology group is either $\F_{p^n}$ or 0, and  the invariants are either $\F_p$ or 0. We can always replace the generators $\Delta,\alpha$ and $\beta$ by any non-zero element of the cohomology group they appear in, and this implies that they are invariant under the Galois action; it follows that
\[ 
\hat H^*(G,(E_n)_*) \simeq \F_p[\alpha,\beta^{\pm 1},\Delta^{\pm 1}]/(\alpha^2)
\]

The group cohomology calculation follows immediately. 
\end{proof}
We are now in position to prove the main result of this note.
\begin{theorem}\label{thm:tatecontractible}
 	The Tate spectrum $E_n^{tG}$ is contractible. 
 \end{theorem}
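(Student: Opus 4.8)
The plan is to compute $\pi_\ast E_n^{tG}$ by running the Tate spectral sequence
\[
\hat H^s(G;\pi_t E_n) \Rightarrow \pi_{t-s}E_n^{tG},
\]
whose $E_2$-page is, by \Cref{prop:cohomcalcs}, the Laurent algebra $\F_p[\alpha,\beta^{\pm 1},\Delta^{\pm 1}]/(\alpha^2)$, and to show it converges to zero. I record bidegrees in the form $(s,t)$, so that $d_r$ has bidegree $(r,r-1)$: thus $|\alpha|=(1,2p-2)$, $|\beta|=(2,2pn)$ and $|\Delta|=(0,2pn^2)$, with $n=p-1$. Since $\beta$ and $\Delta$ are already invertible on $E_2$, every class is a unit multiple of $1$ or of $\alpha$, so it will be enough to produce differentials witnessing that neither of these two $\F_p[\beta^{\pm1},\Delta^{\pm1}]$-lines survives.

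First I would establish the differential $d_{2p-1}(\Delta)=u\,\alpha\beta^{p-1}$ for some unit $u$. A bidegree count shows this is both the earliest possible nonzero differential supported by $\Delta$ and the unique candidate in that filtration jump: the equation $pn^2 b + a(pn-1)=pn^2-p+1$ governing a putative target $\alpha\beta^a\Delta^b$ has its first nonnegative solution at $a=p-1$, $b=0$. This is exactly the differential in the homotopy fixed point spectral sequence for $E_{p-1}^{hG}$, which maps to the Tate spectral sequence along the comparison map of \Cref{sec:tate}, and which is the folklore Hopkins--Miller computation recorded in \cite{nave,GHMR,skypaper}. Because $\alpha$ and $\beta$ are permanent cycles at this stage and $d_{2p-1}(\Delta^p)=0$, the Leibniz rule gives $d_{2p-1}(\Delta^k)=ku\,\Delta^{k-1}\alpha\beta^{p-1}$, and a Koszul-type homology computation over $\F_p[\beta^{\pm1},\Delta^{\pm1}]$ identifies
\[
E_{2p}\simeq \F_p[\alpha',\beta^{\pm1},(\Delta')^{\pm1}]/((\alpha')^2),\qquad \alpha'=\alpha\Delta^{-1},\ \Delta'=\Delta^p,
\]
a page of the same shape, now with $\Delta'$ a permanent cycle detecting the periodicity of $E_{p-1}^{hG}$.

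The second and final input is a differential $d_{r'}$, again imported from the homotopy fixed point computation, with $d_{r'}(\alpha\Delta^{n})=u'\beta^{m}$ where $m=(p-2+pn^3)/(pn-1)$ and $r'=2m-1$; equivalently $d_{r'}(\alpha')=u'\beta^{m}(\Delta')^{-1}$. The decisive feature of the Tate setting is that this target is a \emph{unit}, since $\beta$ and $\Delta'$ are invertible. As $\beta$ and $\Delta'$ are permanent cycles, the Leibniz rule makes $d_{r'}$ act on $E_{2p}$ by $f+\alpha' g\mapsto u'\beta^{m}(\Delta')^{-1}g$ for $f,g\in\F_p[\beta^{\pm1},(\Delta')^{\pm1}]$; multiplication by the unit $u'\beta^{m}(\Delta')^{-1}$ carries the $\alpha'$-line isomorphically onto the $1$-line, so the homology vanishes and $E_{r'+1}=E_\infty=0$. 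A further bidegree count shows there is no room for any intermediate differential between pages $2p$ and $r'$, so these two families are all there is. Hence $\pi_\ast E_n^{tG}=0$ and $E_n^{tG}\simeq\ast$.

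I expect the main obstacle to be the homotopy-theoretic input, namely the verification of the two differentials --- in particular that $\alpha'$ is not a permanent cycle but supports $d_{r'}$ onto a power of $\beta$. Once these are in hand the collapse of the Tate spectral sequence is forced purely formally, because in the Tate range the second differential necessarily hits a unit; the differentials themselves are precisely those of the homotopy fixed point spectral sequence for $E_{p-1}^{hG}$, transported along the natural map of multiplicative spectral sequences, and their derivation is the substance of the detailed fixed-point calculation carried out in the style of \cite{GHMR,skypaper}.
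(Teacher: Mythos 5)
Your proposal is correct and takes essentially the same approach as the paper: the same $E_2$-page from \Cref{prop:cohomcalcs}, the same two differentials $d_{2p-1}(\Delta)=\alpha\beta^{p-1}$ and $d_{2n^2+1}(\alpha\Delta^{n})=\beta^{n^2+1}$ transported from the homotopy fixed point spectral sequence, and the same punchline that the second differential hits a unit because $\beta$ and $\Delta$ are invertible in Tate cohomology. The only cosmetic difference is that the paper pins the differentials down inside the proof via sparsity and Toda's relations $\alpha_1\beta_1^p=0$ and $\beta_1^{pn+1}=0$, whereas you cite them as the known Hopkins--Miller computation and add an explicit description of the intermediate page $E_{2p}$ that the paper leaves implicit.
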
 
\begin{proof}

We have calculated the $E_2$-term of the spectral sequence above. The differentials in the Tate spectral sequence can be inferred from the differentials in the homotopy fixed point spectral sequence, which are now well known. In particular $\alpha$ and $\beta$ denote the image, under the unit map $S^0 \to E_n^{hG}$, of $\alpha_1 \in \pi_{2p-3}S^0$ and $\beta_1 \in \pi_{2(p^2-p-1)}S^0$. 

Note that in the spectral sequence $E_2^{s,t}$ is concentrated in degrees $t \equiv 0 \mod (2(p-1))$; this implies that $d_r =0$ unless $r \equiv 1 \mod (2(p-1))$. For example, the first possible differential is a $d_{2n+1}$. Toda~\cite{toda} has shown that $\alpha_1 \beta_1^p =0 \in \pi_*(S^0)$; thus we must have $\alpha \beta^p=0$ in our spectral sequence, and the only possibility is that $d_{2n+1}(\Delta \beta) = \alpha \beta^{p}$, or in other words, $d_{2n+1}(\Delta) = \alpha \beta^{p-1}$ (at least up to multiplication by a unit in $\F_{p^n}$).

Likewise Toda's relation $\beta_1^{pn+1} = 0 \in \pi_*(S^0)$ implies that $\beta^{pn+1}$ must die; sparsity again implies that the only possible differential is $d_{2n^2+1}(\Delta^n \alpha \beta^n) = \beta^{pn+1}$.  This gives the differential $d_{2n^2+1}(\Delta^n \alpha) = \beta^{n^2+1}$. The homotopy fixed point spectral sequence collapses at this stage for degree reasons. In the Tate spectral sequence we have that $\beta$ is invertible, and so the second differential implies that everything in the spectral sequence dies, and indeed $E_n^{tG} \simeq \ast$. This is shown in~\Cref{fig:tSS} for $n=2$. 

\begin{figure}[tbh]
\centering
\includegraphics[scale=0.97]{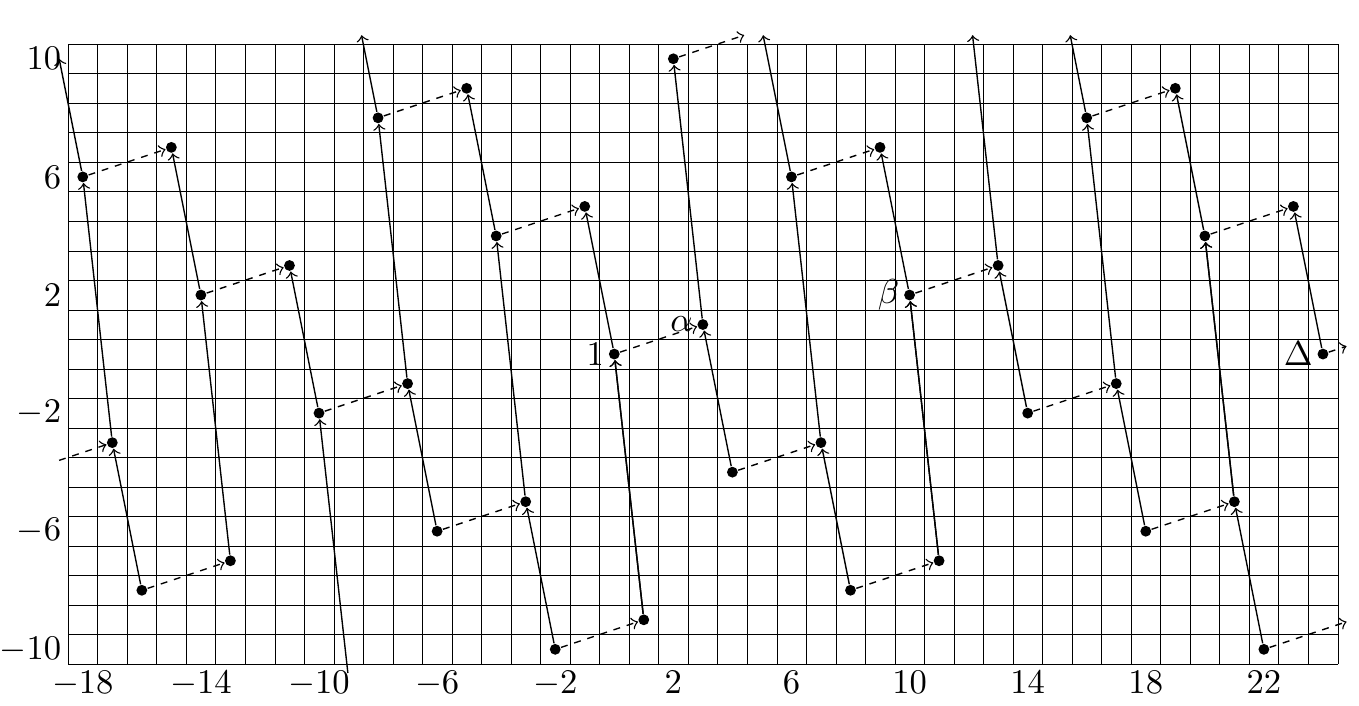}
\caption{The Tate spectrum sequence to compute $E_n^{tG}$, shown for $n=2,p=3$. Here the dashed lines representation multiplication by $\alpha$ and the vertical lines are differentials.}\label{fig:tSS}
\end{figure}
\end{proof}
\begin{rem}
   The key point is that the nilpotent element $\beta \in \pi_*S^0$ is detected by an invertible class in Tate cohomology . Similarly in~\cite{koduality} the contractibility of $KU^{tC_2}$ was because the nilpotent element $\eta$ was inverted in the $E_2$-term of the Tate spectral sequence. This suggests a possible approach to generalising these arguments. 
\end{rem}
We have also computed the following.
\begin{lemma}\label{lem:htpyftptcalcs}
In the spectral sequence $H^*(G,(E_n)_*) \Rightarrow \pi_*(E_n^{hG})$ we have (in positive cohomological degrees)
\[
E_\infty^{\text{even},\ast} \simeq \frac{\F_{p}[\Delta^{\pm p}][\beta]}{\beta^{n^2+1}} \quad \text{ and }\quad E_\infty^{\text{odd},\ast} \simeq \frac{\alpha \F_p[\Delta^{\pm p}]\{1,\Delta,\cdots,\Delta^{p-2}\}[\beta]}{\beta^n}.
\]
\end{lemma}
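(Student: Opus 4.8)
The plan is to take as given the two differentials established in the proof of \Cref{thm:tatecontractible} and simply compute the bigraded homology of the $E_2$-page. In positive cohomological degrees \Cref{prop:cohomcalcs} identifies this $E_2$-page with the ring $\F_p[\alpha,\beta,\Delta^{\pm 1}]/(\alpha^2)$, and we know that $\alpha$ and $\beta$ are permanent cycles (being images of $\alpha_1,\beta_1 \in \pi_*S^0$), while $d_{2n+1}(\Delta) = \alpha\beta^{p-1}$ and $d_{2n^2+1}(\Delta^n\alpha) = \beta^{n^2+1}$ up to units. Since the spectral sequence is multiplicative, knowing these differentials on the algebra generators determines them everywhere via the Leibniz rule, so the whole computation reduces to careful bookkeeping in two stages.

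First I would run $d_{2n+1}$. By the Leibniz rule $d_{2n+1}(\Delta^k\beta^j) = k\,\alpha\Delta^{k-1}\beta^{j+n}$ (recall $p-1=n$) and $d_{2n+1}(\alpha\Delta^k\beta^j)=0$. Hence on even classes only the powers $\Delta^{pm}$ survive, producing the factor $\F_p[\Delta^{\pm p}]$, and I would record the $E_{2n+2}$-page as follows: the even part is the free module $\F_p[\Delta^{\pm p},\beta]$, while, writing $k = pm+i$ with $0 \le i \le p-1$, the odd class $\alpha\Delta^{k}\beta^j$ survives for all $j$ when $i = p-1$ (a free $\F_p[\Delta^{\pm p}]$-summand on $\alpha\Delta^{p-1}$) and only for $j \le p-2 = n-1$ otherwise (a $\beta^n$-truncated summand on each $\alpha\Delta^i$, $0\le i\le n-1$). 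The key observation here is that $\alpha\Delta^k\beta^j$ is a boundary precisely when $j \ge n$ and $i \ne p-1$.

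Next, after noting that the intermediate differentials $d_r$ with $2n+1 < r < 2n^2+1$ vanish (by sparsity together with the collapse already established in \Cref{thm:tatecontractible}), I would run $d_{2n^2+1}$. The crucial arithmetic input is the congruence $n^2 \equiv 1 \pmod p$, valid since $n = p-1$. A degree count shows that the target of $d_{2n^2+1}$ on an odd generator $\alpha\Delta^i$ must be an even class $\Delta^{pm}\beta^{n^2+1}$, and matching internal degrees forces $pm = i - n$; this has an integer solution only when $i \equiv n \equiv p-1 \pmod p$. Consequently $d_{2n^2+1}$ vanishes on every surviving truncated summand and, by a similar count, on the even classes $\Delta^{pm}$, whereas on the free summand it is multiplication by the unit $\beta^{n^2+1}$, mapping $\F_p[\Delta^{\pm p}]\{\alpha\Delta^{p-1}\}[\beta]$ isomorphically onto the ideal $(\beta^{n^2+1}) \subset \F_p[\Delta^{\pm p},\beta]$. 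Reading off the homology gives $E_\infty^{\mathrm{even}} \simeq \F_p[\Delta^{\pm p}][\beta]/\beta^{n^2+1}$ and $E_\infty^{\mathrm{odd}} \simeq \alpha\,\F_p[\Delta^{\pm p}]\{1,\Delta,\dots,\Delta^{p-2}\}[\beta]/\beta^n$, and the spectral sequence collapses at $E_{2n^2+2}=E_\infty$.

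I expect the main obstacle to be disentangling the two truncations -- $\beta^n$ on the odd part and $\beta^{n^2+1}$ on the even part -- while simultaneously tracking the $\Delta^p$-periodicity, and in particular verifying cleanly that $d_{2n^2+1}$ does not hit or emanate from any of the surviving odd classes. This is exactly the point where the congruence $n^2 \equiv 1 \pmod p$ is indispensable: without it the naive degree count would permit spurious differentials between the two families. One should also keep in mind that the phrase ``positive cohomological degrees'' deliberately excludes the image of the transfer in filtration $0$, which is not captured by the polynomial description of the $E_2$-page.
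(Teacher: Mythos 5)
Your proposal is correct and is essentially the paper's own argument: the paper states this lemma as a byproduct of the computation in the proof of \Cref{thm:tatecontractible}, using exactly the two differentials $d_{2n+1}(\Delta)=\alpha\beta^{p-1}$ and $d_{2n^2+1}(\Delta^n\alpha)=\beta^{n^2+1}$, multiplicativity, and sparsity, with the collapse at $E_{2n^2+2}$ for degree reasons. Your page-by-page Leibniz bookkeeping and the degree count ruling out differentials on the truncated odd summands simply make explicit what the paper leaves implicit.
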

\begin{rem}
\begin{enumerate}
    \item We can say slightly more about the image of the transfer, again referring to Nave for the proof. There is a natural map
    \[
\F_p[\Delta^{\pm 1}] \to H^0(G,(E_n)_*)/p
    \]
    which is a split injection. If we let $M$ denote the other summand determined by the splitting, then
    \[
H^*(G,(E_n)_*) = \frac{H^0(G,(E_n)_*)[\alpha,\beta]}{(p\alpha)(p\beta)(\alpha^2)(M\alpha)(M\beta)}.
    \]
    \item In fact the previous two results holds true for more than just the group $G$. Let $Z \subset G$ be the group generated by $\tau^n$. Then the theorem is in fact true for any group which contains the subgroup $Z$ and an element of order $p$. See~\cite[Lemma 23]{HennFinite} and the discussion therein.
    \item   Note that $E_n^{hG}$ is periodic of order $2p^2n^2$, and that $\Delta^p$ detects a periodicity class. 
\end{enumerate}

%\begin{rem}
 
\end{rem}

We give two simple corollaries of~\Cref{thm:tatecontractible}. The first follows essentially by definition. 
\begin{cor}
	The norm map $N(E_n):(E_n)_{hG} \to E_n^{hG}$ is an equivalence.
\end{cor}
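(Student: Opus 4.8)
The plan is to read off the result directly from the cofiber sequence that \emph{defines} the Tate spectrum, together with \Cref{thm:tatecontractible}. Recall from \Cref{sec:tate} that $E_n^{tG}$ is, by definition, the cofiber of the norm map, so that there is a cofiber sequence of spectra
\[
(E_n)_{hG} \xrightarrow{N(E_n)} E_n^{hG} \to E_n^{tG}.
\]
First I would invoke \Cref{thm:tatecontractible}, which gives $E_n^{tG} \simeq \ast$, so that $\pi_* E_n^{tG} = 0$.

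Next I would pass to the long exact sequence in homotopy groups associated to this cofiber sequence,
\[
\cdots \to \pi_k\bigl((E_n)_{hG}\bigr) \xrightarrow{N(E_n)_*} \pi_k\bigl(E_n^{hG}\bigr) \to \pi_k\bigl(E_n^{tG}\bigr) \to \pi_{k-1}\bigl((E_n)_{hG}\bigr) \to \cdots.
\]
Since the terms $\pi_*(E_n^{tG})$ all vanish, exactness forces $N(E_n)_*$ to be an isomorphism on homotopy groups in every degree. As a map of spectra that induces an isomorphism on all homotopy groups is a weak equivalence, this shows that $N(E_n):(E_n)_{hG} \to E_n^{hG}$ is an equivalence, which is precisely the claim.

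There is no substantive obstacle here: the corollary is a purely formal consequence of the identification of $E_n^{tG}$ as the cofiber of the norm map and the vanishing established in \Cref{thm:tatecontractible}. The only point worth stating explicitly is the standard fact that, in the stable (spectrum-level) setting, a map with contractible cofiber is automatically an equivalence, which is exactly the input that makes this \emph{follow by definition}.
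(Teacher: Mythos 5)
Your proof is correct and is exactly the paper's argument: the paper simply states that the corollary ``follows essentially by definition,'' meaning precisely that $E_n^{tG}$ is defined as the cofiber of the norm map, so its contractibility (Theorem 5.6) forces the norm map to be an equivalence. Your spelling out of the long exact sequence in homotopy groups is just an explicit rendering of this same formal observation.
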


In the following the duality functor we are interested in is $K(n)$-local Spanier-Whitehead duality; that is $DX = F(X,L_{K(n)}S^0)$. We learnt the following technique from~\cite[Proposition 2.5.1]{behrensmod}. 	
\begin{cor}
	$E_n^{hG}$ is $K(n)$-locally self-dual up to suspension. In fact $DE_n^{hG} \simeq \Sigma^N E_n^{hG}$ where $N \equiv -n^2 \mod (2pn^2)$ and $N$ is only uniquely defined modulo $2p^2n^2$.  
\end{cor}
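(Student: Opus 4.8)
The plan is to combine the vanishing of the Tate spectrum (\Cref{thm:tatecontractible}) with the $K(n)$-local self-duality of $E_n$. First I would use the norm equivalence to transport the duality onto the homotopy orbits. Since $N(E_n)\colon (E_n)_{hG}\xr{\sim}E_n^{hG}$ is an equivalence,
\[
DE_n^{hG}=F(E_n^{hG},L_{K(n)}S^0)\simeq F\bigl((E_n)_{hG},L_{K(n)}S^0\bigr),
\]
and the standard adjunction $F(X_{hG},Y)\simeq F(X,Y)^{hG}$, applied with the trivial $G$-action on $Y=L_{K(n)}S^0$, rewrites this as $(DE_n)^{hG}$, where $DE_n=F(E_n,L_{K(n)}S^0)$ carries the induced $G$-action. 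Thus everything reduces to understanding $DE_n$ as a $G$-spectrum and then taking homotopy fixed points.

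Second, I would invoke Gross--Hopkins duality, in the form established by Strickland and refined through the determinant-sphere description, to identify $DE_n\simeq\Sigma^{-n^2}E_n\sdet$ as $\G_n$-spectra, and hence as $G$-spectra by restriction. This produces the $-n^2$ appearing in the statement. On underlying $E_n$-modules the twist $\sdet$ is invisible, since $E_n$ is even periodic; this is exactly why, for $K$ coprime to $p$, where there is no further periodicity, one obtains the clean equivalence $DE_n^{hK}\simeq\Sigma^{-n^2}E_n^{hK}$ with no ambiguity.

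Third, and this is the heart of the matter, I would analyse the determinant twist after passing to fixed points, i.e.\ show $(E_n\sdet)^{hG}\simeq\Sigma^{M}E_n^{hG}$ with $M\equiv 0\bmod 2pn^2$. The determinant character $\det|_G\colon G\to\Z_p^\times$ is trivial on the order-$p$ generator $\zeta$ and takes values in the Teichm\"uller roots of unity, so it has order prime to $p$. Using the explicit description of the invariants in \Cref{lem:cpcalc} and \Cref{prop:cohomcalcs}, I would trivialise the twist, modulo the periodicity, by a power of the class $\Delta$: the $G$-invariant invertible classes in $\pi_*E_n$ are the powers of $\Delta$, of internal degree $2pn^2$, so the correction $M$ is forced to be a multiple of $2pn^2$. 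Combined with the previous step this gives $N\equiv -n^2\bmod 2pn^2$.

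The main obstacle is precisely this last identification together with the resulting indeterminacy. Because $\Delta$ is not a permanent cycle --- only $\Delta^p$ survives, detecting the genuine $2p^2n^2$-fold periodicity of $E_n^{hG}$ recorded in \Cref{lem:htpyftptcalcs} --- the methods here see only the associated graded and cannot separate the $p$ lifts of $M$ modulo $2p^2n^2$ that differ by powers of $\Delta$. Consequently one determines $N$ only modulo $2pn^2$, whereas a suspension $\Sigma^N E_n^{hG}$ is genuinely well defined only modulo the full periodicity $2p^2n^2$; pinning down the residual factor of $p$ would require resolving hidden extensions in the duality pairing, which I would not attempt here.
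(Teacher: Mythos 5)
Your first and last steps match the paper: the reduction $DE_n^{hG}\simeq F((E_n)_{hG},L_{K(n)}S^0)\simeq (DE_n)^{hG}$ via \Cref{thm:tatecontractible} is exactly how the paper begins, and your explanation of the $2pn^2$ versus $2p^2n^2$ indeterminacy (only $\Delta^p$ survives) agrees with the paper's. The problem is the middle. The equivariant identification $DE_n\simeq\Sigma^{-n^2}E_n\sdet$ is not correct: the determinant twist belongs to Gross--Hopkins (Brown--Comenetz) duality --- it is the dualizing object $I_n$ whose Morava module is $\Sigma^{n^2-n}(E_n)_*\sdet$ --- and not to the $K(n)$-local Spanier--Whitehead dual $F(E_n,L_{K(n)}S^0)$. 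The correct equivariant refinement of Strickland's equivalence (established much later, by Beaudry--Goerss--Hopkins--Stojanoska) twists by the \emph{adjoint} representation sphere of $\G_n$; since the adjoint representation has trivial determinant, the induced action on homotopy is untwisted, i.e.\ $\pi_*DE_n\cong\Sigma^{-n^2}(E_n)_*$ as $\G_n$-modules. That untwisted identification is precisely what the paper's proof uses: it runs the homotopy fixed point spectral sequence for $(DE_n)^{hG}$ as a module over the multiplicative spectral sequence for $E_n^{hG}$, identifies the $E_2$-terms up to the shift $-n^2$, and extends a map $S^N\to DE_n^{hG}$ detected on the zero line to the desired equivalence. (The failure of equivariance noted in the paper's final remark is a spectrum-level phenomenon, invisible on homotopy groups; it is not measured by $\sdet$.)

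Worse, even granting your premise, your third step fails, so the errors do not rigorously cancel. The character $\det|_G$ is nontrivial on $\tau$ for every $p\geq 5$: one computes $\det(\tau)=\eta^{1+p+\cdots+p^{n-1}}$ and $1+p+\cdots+p^{n-1}\equiv n+\tfrac{n^2}{2}\not\equiv 0 \pmod{n^2}$ once $n=p-1>2$, so $\det(\tau)$ is a nontrivial root of unity (it is trivial exactly when $p=3$, which is why Behrens' check $44\equiv -4+2\cdot 24$ cannot detect the problem). Twisting by a character that is nontrivial on $\tau$ moves the invariant generators: in the notation of \Cref{lem:cpcalc}, the cohomological-degree-zero generator of $H^*(G,(E_n)_*\sdet)$ becomes $d^{j_0}$ with $pj_0\equiv-(1+p+\cdots+p^{n-1})\pmod{n^2}$ and $j_0\not\equiv 0$, so the twisted cohomology is a shift of $H^*(G,(E_n)_*)$ by an internal degree that is \emph{not} a multiple of $2pn^2$. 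Under your premise, then, the asserted congruence $N\equiv -n^2\pmod{2pn^2}$ would actually fail for $p\geq 5$; your appeal to ``the $G$-invariant invertible classes are powers of $\Delta$'' shows only that the correction $M$ is well defined modulo $2pn^2$, not that it vanishes. A low-height sanity check makes the same point: at $n=1$ with $p$ odd, a determinant twist would force $D(E_1^{hC_2})$ to have homotopy in degrees $\equiv 1\bmod 4$, contradicting $DE_1^{hC_2}\simeq\Sigma^{-1}E_1^{hC_2}$ (the coprime-order case noted in the paper). So the proposal reaches the right statement only because the true twist happens to be trivial on homotopy groups; as written it is not a proof.
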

\begin{proof}
	By definition and the contractibility of the Tate spectrum we have
\[
	D(E_n^{hG}) \simeq F(E_n^{hG},L_{K(n)}S^0) \simeq F((E_n)_{hG},L_{K(n)}S^0) \simeq F(E_n,L_{K(n)}S^0)^{hG} \simeq (DE_n)^{hG},	
\]
and so there is an associated homotopy fixed point spectral sequence
\begin{align}\label{eq:ss}
H^s(G,(DE_n)_t)) \Rightarrow \pi_{t-s}(DE_n)^{hG}.
\end{align} 
By~\cite{strcklandgh} there is an isomorphism $DE_n \simeq \Sigma^{-n^2} E_n$ and so the $E_2$-term of the spectral sequence~\eqref{eq:ss} can be written as 
\[
H^*(G,(DE_n)_*) \simeq H^*(G,\Sigma^{-n^2} E(_n)_*).
\]
This is a spectral sequence of modules over the spectral sequence of algebras
\begin{align}\label{eq:ss2}
H^s(G,(E_n)_t) \Rightarrow \pi_{t-s}(E_n^{hG}),
\end{align}
and so the spectral sequence of~\eqref{eq:ss} is isomorphic to the spectral sequence~\eqref{eq:ss2} up to a shift congruent to $-n^2$ modulo $2pn^2$ (the periodicity of the $E_2$-term). In other words, there exists a map 
$$S^N \to D(E_n^{hG}),$$ which is detected on the zero line of the spectral sequence~\eqref{eq:ss} which extends to an equivalence
\[
\Sigma^N E_n^{hG} \xrightarrow{\simeq} D(E_n^{hG}),
\]
where $N \equiv -n^2 \mod (2pn^2) $, and $N$ is only uniquely determined modulo $2p^2n^2$, the periodicity of $E_n^{hG}$. 
\end{proof}
\begin{rem}
  Since there is an equivalence $F(L_{K(n)}X,L_{K(n)}S^0) \simeq F(X,L_{K(n)}S^0)$ for any $X$, the full strength of~\Cref{thm:tatecontractible} is not needed, and this result could also be deduced from the known fact that there is a $K(n)$-local equivalence between $E_n^{hG}$ and $(E_n)_{hG}$. In fact, a similar argument proves that $DE_n^{hK}$ is self-dual, for any finite $K \subset \G_n$, up to a shift. In fact $H^*(K,(E_n)_*)$ is always periodic, of some period a multiple of 2, and $DE_n^{hK}$ is isomorphic to $\Sigma^N E_n^{hK}$, where $N$ is congruent to $-n^2$ modulo this periodicity. If the order of $K$ does not divide $p$, then $DE_n^{hK} \simeq \Sigma^{-n^2}E_n^{hK}$. 
\end{rem}
\begin{example}
	By Hahn and Mitchell~\cite{hahnmitchell}, or the author and Stojanoska~\cite{koduality}, there is an isomorphism $DE_1^{C_2} \simeq \Sigma^{-1} E_1^{C_2}$, whilst Behrens~\cite{behrensmod} has calculated that $DE_2^{hG_{24}} \simeq \Sigma^{44} E_2^{hG_{24}}$.
\end{example}
The fact that $DE_2^{hG_{24}} \simeq \Sigma^{44} E_2^{hG_{24}}$ and not $\Sigma^{-2}E_2^{hG_{24}}$ shows that the equivalence $DE_n \simeq \Sigma^{-n^2}E_n$ is not $\G_n$-equivariant on the point-set level, as is noted in~\cite[Section 7]{davisorbits}.

\bibliographystyle{amsalpha}
    \bibliography{../BibtexDatabase}
\end{document}